\font\smallit=cmti10
\font\smalltt=cmtt10
\renewcommand\section{\@startsection {section}{1}{\z@}
{-30pt \@plus -1ex \@minus -.2ex}
{2.3ex \@plus.2ex}
{\normalfont\normalsize\bfseries\boldmath}}
\renewcommand\subsection{\@startsection{subsection}{2}{\z@}
{-3.25ex\@plus -1ex \@minus -.2ex}
{1.5ex \@plus .2ex}
{\normalfont\normalsize\bfseries\boldmath}}
\renewcommand{\@seccntformat}[1]{\csname the#1\endcsname. }
\newtheorem{theorem}{Theorem}
\newtheorem{lem}{Lemma}
\theoremstyle{definition}
\newtheorem{definition}{Definition}
\DeclarePairedDelimiter{\ceil}{\lceil}{\rceil}
\tikzset{  
    auto,node distance =1.5 cm and 1.3 cm, thick,
    state/.style ={circle, draw, minimum width = 0.9 cm, fill=blue!20}, 
    point/.style = {circle, draw, inner sep=0.18cm, fill, node contents={}},  
    el/.style = {inner sep=2.5pt, align=right, sloped}  
}
\begin{document}

\begin{center}
\uppercase{\bf Ramsey Theory on the Integer Grid: The ``L" Problem}
\vskip 20pt
{\bf Isaac Mammel}\\
{\smallit Department of Mathematics, University of Maryland, College Park, Maryland, United States}\\
{\tt imammel@terpmail.umd.edu}\\
\vskip 10pt
{\bf William Smith}\\
{\smallit Department of Mathematics, University of South Carolina, Columbia, South Carolina, United States}\\
{\tt wjs11@email.sc.edu}\\
\vskip 10pt
{\bf Carl Yerger}\\
{\smallit Department of Mathematics and Computer Science, Davidson College, Davidson, North Carolina, United States}\\
{\tt cayerger@davidson.edu}\\
\end{center}
\vskip 20pt

\centerline{\smallit Received: , Revised: , Accepted: , Published: } 
\vskip 30pt

\centerline{\bf Abstract}
\noindent
In an $[n] \times [n]$ integer grid, a monochromatic $L$ is any set of points $\{(i, j), (i, j+t), (i+t, j+t)\}$ for some positive integer $t$, where $1 \leq i, j, i+t, j+t \leq n$. In this paper, we investigate the upper bound for the smallest integer $n$ such that a $3$-colored $n \times n$ grid is guaranteed to contain a monochromatic $L$. We use various methods, such as counting intervals on the main diagonal and using Golomb rulers, to improve the upper bound. This bound originally sat at 2593, and we improve it first to 1803, then to 1573, then to 772, and finally to 493. In the latter part of this paper, we discuss the lower bound and our attempts to improve it using SAT solvers.

\pagestyle{myheadings}
\markright{\smalltt INTEGERS: 24 (2024)\hfill}
\thispagestyle{empty}
\baselineskip=12.875pt
\vskip 30pt

\section{Introduction to the Problem}
\hspace{0.2in} The problem we deal with in this paper is a Ramsey-type problem on the integer grid. Namely, we deal with the following corollary of the Gallai-Witt theorem. \begin{theorem}
    [\cite{witt}] For all positive integers $k$, there exists a positive integer $n$ such that, for all $k$-colorings of $[n] \times [n]$, there is a monochromatic $L$. That is, there exist positive integers $x, y,$ and $d$ such that:
    \begin{enumerate}
        \item $(x, y), (x+d, y),$ and $(x+d, y + d)$ are all in $[n] \times [n]$, and
        \item $(x, y), (x+d, y),$ and $(x+d, y + d)$ are all the same color.
    \end{enumerate}
\end{theorem}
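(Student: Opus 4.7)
The plan is to deduce this theorem as an immediate corollary of the Gallai-Witt theorem \cite{witt}. In its finite form, Gallai-Witt asserts that for every finite configuration $S \subset \mathbb{Z}^m$ and every positive integer $k$, there exists $N = N(S,k)$ such that every $k$-coloring of $[N]^m$ contains a monochromatic dilated translate of $S$, i.e.\ a set of the form $a + \lambda S := \{a + \lambda s : s \in S\}$ for some $a \in \mathbb{Z}^m$ and some positive integer $\lambda$.

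First I would fix the three-point configuration $S = \{(0,0),\,(1,0),\,(1,1)\} \subset \mathbb{Z}^2$, which is exactly the axis-aligned ``L'' shape appearing in the theorem. Given $k$, set $n := N(S,k)$ and let an arbitrary $k$-coloring of $[n] \times [n]$ be given. Gallai-Witt then supplies some $a = (x,y) \in \mathbb{Z}^2$ and some integer $\lambda \geq 1$ such that the three points of $a + \lambda S$ all lie in $[n]\times[n]$ and share a common color. Writing $d := \lambda$, those three points are $(x,y)$, $(x+d,y)$, and $(x+d, y+d)$, which is precisely the configuration demanded by conditions (1) and (2) of the theorem.

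The only potential obstacle is that some references state Gallai-Witt only in its infinite form — that every $k$-coloring of $\mathbb{Z}^2$ contains a monochromatic dilated translate of $S$. A standard compactness argument (K\"onig's lemma applied to the finitely branching tree of partial $k$-colorings of growing initial squares) promotes the infinite statement to the finite one used above. Beyond this mild technicality the deduction is essentially content-free: since $|S| = 3$, the genuine difficulty that motivates the rest of the paper is not the \emph{existence} of some valid $n$ but rather the search for small explicit values of it, which is precisely what the subsequent sections attack.
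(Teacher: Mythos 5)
Your proposal is correct and follows exactly the route the paper intends: the paper states this result as an immediate corollary of the Gallai--Witt theorem and simply cites Witt without writing out the deduction, while you explicitly instantiate the configuration $S=\{(0,0),(1,0),(1,1)\}$ and read off the monochromatic homothetic copy as the required $L$. The compactness remark covering the infinite-form statement is a reasonable (and standard) extra precaution, but nothing more needs to be said.
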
 To elaborate, an $[n] \times [n]$ \textit{integer grid} is an integer lattice with $n$ rows, $n$ columns, and a set of points such that, for each pair of integers $i, j$ with $1 \leq i, j \leq n$, there is exactly one point $p$ lying in row $i$ and column $j$. In such a grid, the rows are counted going downward and the columns are counted left-to-right. Such a point $p$ is said to be \textit{located at} $(i,j)$. 

An interesting problem is to find $n = R_k(L)$, the least positive integer such that the $n \times n$ integer grid with $k$ colors must contain a monochromatic $L$. From our definition, it is obvious that $R_1(L) = 2$, since a monochromatic $L$ can only appear in a grid of at least length $2$ and is guaranteed to appear in a monochromatic $2 \times 2$ grid at the points $(1,1), (2, 1),$ and $(2, 2)$. It is also known that $R_2(L)=5$ \cite{manske}. As a result, we focus mostly on $R_3(L)$, of which much less is known.

In 2015, Canacki et al. \cite{canackisat} found that $21 \leq R_3(L) \leq 2593$. This paper improves substantially on the upper bound, the methods of which are detailed in the following sections. In Section 2.1, we prove $R_3(L) \leq 1804$, in Section 2.2, we lower this upper bound to $1573$, and in Section $2.3$, we lower this bound to $772$ and finally $493$. As we do not improve upon the lower bound, discussion of it is omitted from this paper. However, information on the lower bound as well as speculation on how to improve it can be found in the arxiv version of this paper. 

\section{The Upper Bound}
To aid in our arguments, we will open with the proof in Canacki et al. \cite{canackisat} that $R_3(L) \leq 2593$. However, before continuing, we will introduce diagonals and subdiagonals, as these are crucial to all our proofs. The \textit{main diagonal} on an $n \times n$ grid is the series of points $(1, 1), (2, 2), \dots, (n, n)$. A \textit{subdiagonal} is a series of points in the $n \times n$ integer grid that follow the form of either $(1, k), (2, k+1), \dots, (1+n-k, n)$ or $(k, 1), (k+1, 2), \dots, (n, 1+n-k)$ for some integer $k$ where $1 \leq k \leq n$. We label $S_k$ as the subdiagonal containing the point $(1, k+1)$ (that is, the subdiagonal $k$ points below the main diagonal). The subdiagonals above the main diagonal are rarely discussed.

In the proofs that follow, we assume our $3$ colors in a $3$-colored grid to be red, green, and blue.

\begin{theorem}
    [\cite{canackisat}] The value of $R_3(L)$ is at most $2593$.
\label{2593}\end{theorem}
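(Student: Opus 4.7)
The driving observation is that any two same-colored points $(a,a)$ and $(b,b)$ on the main diagonal with $a<b$ force the cell $(b,a)$ to carry a different color: otherwise the triple $\{(a,a),(b,a),(b,b)\}$ is a monochromatic $L$ with corner $(b,a)$ and arm length $b-a$. Exactly the same rule applies along any subdiagonal $S_k$: a same-colored pair on $S_k$ forbids one specific cell on its ``inside'' from repeating that color.

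The plan is to treat the main diagonal as a $3$-coloring of $[n]$ and apply Van der Waerden's theorem. For $n \ge W(k,3)$ the diagonal contains a monochromatic $k$-term arithmetic progression, say of color red, at positions $(a+it,\,a+it)$ for $i=0,\dots,k-1$. Each pair $i<j$ forbids the cell $(a+jt,\,a+it)$ from being red, so these $\binom{k}{2}$ cells form a triangular ``contracted'' sub-grid in which any $L$-shape pulls back to a bona fide monochromatic $L$ in the original grid (its arm length is just a multiple of $t$). Since this triangular sub-grid uses only the two remaining colors, I would then find a monochromatic $L$ inside it using the fact $R_2(L)=5$ from the excerpt---either by locating a large enough square inscribed in the triangle or by a direct $2$-color argument on the triangular region.

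The main obstacle is quantitative. To land exactly at $R_3(L)\le 2593$ one must pick $k$ so that both $W(k,3)$ and the $2$-color $L$-Ramsey constant on the triangular region of side $k$ fit inside the target, and since $W(k,3)$ grows explosively this balancing is delicate. The specific constant $2593$ presumably arises from choosing the smallest $k$ that makes the $2$-color step go through; one may also need to run the same argument across several subdiagonals $S_k$ simultaneously, stacking forbidden-cell constraints, in order to shave the constant down to $2593$ rather than a cruder tower. I expect that carefully combining these ingredients, rather than any single clever trick, is what yields the final numerical answer.
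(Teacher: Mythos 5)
Your opening observation is exactly the engine of the paper's proof: a same-colored pair $(a,a),(b,b)$ on the main diagonal forbids $(b,a)$ from carrying that color, and the same rule propagates to subdiagonals. But the route you take from there --- Van der Waerden to get a monochromatic $k$-term AP on the diagonal, then a contracted triangular sub-grid, then $R_2(L)=5$ --- cannot reach $2593$, and the obstacle you flag as ``delicate balancing'' is in fact fatal. The contracted triangle on $\binom{k}{2}$ cells lies strictly below the diagonal of a $k\times k$ array, so to inscribe the $5\times 5$ square needed for the $R_2(L)=5$ step you need roughly $k\ge 10$; even settling for some ad hoc $2$-color forcing configuration inside the triangle, small $k$ demonstrably fails (for $k=4$ the six forbidden cells admit only four $L$'s and are easily $2$-colored without a monochromatic one). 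Meanwhile $W(3,4)=293$ already, $W(3,5)$ is unknown with lower bounds in the thousands, and $W(3,10)$ is astronomically beyond $2593$. This is essentially the classical Graham--Rothschild-style proof of the corner theorem, and it inherently produces tower-type constants.

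The missing idea is to replace Van der Waerden entirely with pigeonhole counting on subdiagonals. Take the $\ge n/3$ red points on the main diagonal; their $\binom{n/3}{2}$ forbidden cells are distributed over only $n-1$ subdiagonals, so some single subdiagonal carries at least $\binom{n/3}{2}/(n-1)$ of them, hence at least $b=\binom{n/3}{2}/(2(n-1))\approx n/36$ of a single color, say blue. Crucially, no AP structure is needed: any two blue cells on the \emph{same} subdiagonal again force a third cell, which can be neither blue (blue $L$) nor red (it completes a red $L$ with the diagonal points that did the original forcing), hence must be green. That yields $\binom{b}{2}$ forced green cells over at most $n-2$ subdiagonals; once two of them land on one subdiagonal, the cell they force can be no color at all. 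The condition $\binom{b}{2}/(n-2)>1$ first holds at $n=2593$. Your closing remark about ``running the argument across several subdiagonals simultaneously'' gestures in this direction, but as written the proposal has no mechanism that avoids the Van der Waerden blow-up.
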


\begin{proof}
    Assume we have a $3$-coloring of the $n \times n$ grid. Consequentially, there are $n$ points on the main diagonal, and since the diagonal is $3$-colored, there exists $\frac{n}{3}$ points of a single color on the main diagonal. Without loss of generality, assume this color is red. For each pair of red points on the main diagonal, there is a unique point in the grid below this main diagonal such that if colored red, this point and the two selected red points form a monochromatic $L$. Thus, there are $\binom{n/3}{2}$ points in the grid that must be colored either blue or green.
    
    As there are $\binom{n/3}{2}$ blue or green points across $n-1$ subdiagonals, there lie $\frac{\binom{n/3}{2}}{n-1}$ blue or green points on some subdiagonal $S_{\mathcal{B}}$. Since these points are $2$-colored, there are either $\frac{\binom{n/3}{2}}{2(n-1)}$ blue or $\frac{\binom{n/3}{2}}{2(n-1)}$ green points on this diagonal. Without loss of generality, assume the majority are blue, and let us define $b = \frac{\binom{n/3}{2}}{2(n-1)}$. As with the red points, each pair of these blue points corresponds to a unique point in the subdiagonals below such that if colored blue, this point and the pair of blue points form a monochromatic $L$. Moreover, this point cannot be colored red, since it will form a monochromatic $L$ with the red points that force this pair of blue points to be blue. So there are $\binom{b}{2}$ points that must be colored green.
    
    Since there are at most $n-2$ subdiagonals below $S_{\mathcal{B}}$, one of these subdiagonals $S_{\mathcal{G}}$ must contain $\frac{\binom{b}{2}}{n-2}$ green points. For each of these green points on $S_{\mathcal{G}}$, there is a corresponding point that cannot be colored green, or else it will form a monochromatic $L$ with its pair on $S_{\mathcal{G}}$. Moreover, it cannot be colored red or blue, or it will form a monochromatic $L$ with the red and/or blue points that forced the pair on $S_{\mathcal{G}}$ to be green. So if there is more than one green point on $S_{\mathcal{G}}$ (that is, if $\frac{\binom{b}{2}}{n-2} > 1$), we reach a contradiction. The smallest $n$ such that $\frac{\binom{b}{2}}{n-2} > 1$ is $2593$ \cite{canackisat}.
\end{proof}

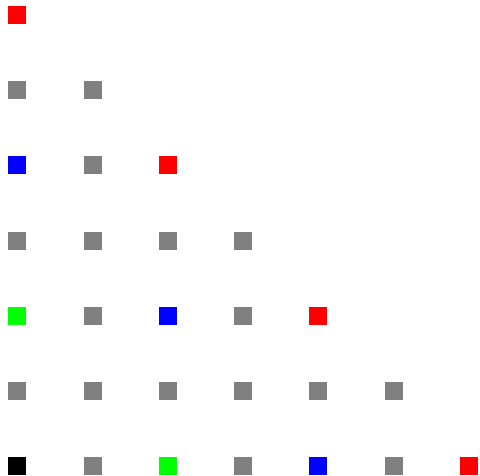
\begin{figure}[!h]
\centering
\begin{tikzpicture}
    \node[fill=red] (1) at (0,6) {}; 
    \node[fill=gray] (2) at (0,5) {};
    \node[fill=blue] (3) at (0,4) {};  
    \node[fill=gray] (4) at (0, 3) {};  
    \node[fill=green] (5) at (0, 2) {}; 
    \node[fill=gray] (6) at (0, 1) {}; 
    \node[fill=black] (7) at (0, 0) {}; 
    \node[fill=gray] (8) at (1,5) {};
    \node[fill=gray] (10) at (1, 4) {}; 
    \node[fill=gray] (11) at (1, 3) {}; 
    \node[fill=gray] (12) at (1, 2) {}; 
    \node[fill=gray] (13) at (1, 1) {}; 
    \node[fill=gray] (14) at (1, 0) {}; 
    \node[fill=red] (16) at (2,4) {};
    \node[fill=gray] (17) at (2, 3) {}; 
    \node[fill=blue] (18) at (2, 2) {}; 
    \node[fill=gray] (19) at (2, 1) {}; 
    \node[fill=green] (20) at (2, 0) {};
    \node[fill=gray] (22) at (3,3) {};
    \node[fill=gray] (23) at (3, 2) {}; 
    \node[fill=gray] (24) at (3, 1) {}; 
    \node[fill=gray] (25) at (3, 0) {}; 
    \node[fill=red] (27) at (4,2) {};
    \node[fill=gray] (28) at (4, 1) {};
    \node[fill=blue] (29) at (4, 0) {};
    \node[fill=gray] (31) at (5,1) {};
    \node[fill=gray] (32) at (5, 0) {};
    \node[fill=red] (34) at (6,0) {};
\end{tikzpicture}
\caption{An illustration of the contradiction we reach in Theorem $1$. Points with unspecified color are grey, and the point that cannot be any color without forming a monochromatic $L$ is black.}
\label{Lproof}
\end{figure}

\subsection{Intervals}

We begin this section with a definition and a lemma concerning the minimum number of blue points on a given diagonal needed to force a contradiction.

\begin{definition}
    Let $n \geq 3$. Assume there is a 3-coloring of the $n \times n$ grid. Let $p$ be a point below the main diagonal. If $(1)$ $p$ is colored blue, and $(2)$ there are two red points on the main diagonal such that those points and $p$ form an $L$, then $p$ is \textit{forced by red to be blue}.
\end{definition}

\begin{lem}
   Let $n \geq 3$. Let 
    \[
    b = \left\lceil{\sqrt{2n-\frac{15}{4}}+ \frac{1}{2}}\right\rceil.
    \] If there are at least $b$ blue points on a diagonal which are forced by red to be blue, then there exists a monochromatic $L$ in the $n \times n$ grid.
\end{lem}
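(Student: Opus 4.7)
The plan is to iterate, one level deeper, the pigeonhole chain used in the proof of Theorem~\ref{2593}. Assume $b$ blue points are forced by red on a single subdiagonal $S_k$. First I would observe that each of the $\binom{b}{2}$ unordered pairs of these blue points supplies two vertices of a unique $L$ whose third vertex (the ``corner'') lies on a strictly lower subdiagonal. That corner cannot be blue (else it completes a monochromatic blue $L$ with the pair) and cannot be red (else it completes a monochromatic red $L$ with one of the main-diagonal red pairs that originally forced one of the blues). Hence the corner must be green, so we obtain $\binom{b}{2}$ green-forced points spread across the at most $n-2$ subdiagonals strictly below $S_k$.

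The key step is then a second pigeonhole: if $\binom{b}{2} > n-2$, some subdiagonal $S_m$ contains at least two green-forced points, say at $(s_1, s_1 - m)$ and $(s_2, s_2 - m)$ with $s_1 < s_2$. Their induced corner $(s_2, s_1 - m)$ cannot be green (monochromatic green $L$ with the pair), cannot be blue (it forms a blue $L$ with the two blues on $S_k$ that lie in the forcing genealogy of these greens), and cannot be red (it forms a red $L$ with the main-diagonal red points that sit one further step up the forcing chain). This situation can only be avoided if a monochromatic $L$ was already present in the original $3$-coloring, which is exactly the desired conclusion. I expect the main obstacle to be the bookkeeping: one must carefully track the genealogy of each color-forced point and verify that its required red and blue ``ancestors'' occupy precisely the coordinates needed to close each $L$, paying attention to the sign of the displacement so that every $L$ produced genuinely lies in $[n]\times[n]$.

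Finally, I would solve $\binom{b}{2} > n - 2$ for $b$: rewriting as $b^{2} - b > 2n - 4$ and completing the square yields $\left(b - \tfrac{1}{2}\right)^{2} > 2n - \tfrac{15}{4}$, so the smallest admissible integer value is $b = \left\lceil \sqrt{2n - \tfrac{15}{4}} + \tfrac{1}{2}\, \right\rceil$, matching the threshold stated in the lemma.
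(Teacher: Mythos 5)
Your proposal follows essentially the same route as the paper: each pair of forced-blue points on the subdiagonal forces a green corner (it can be neither blue nor red), pigeonholing the resulting $\binom{b}{2}$ green points into the at most $n-2$ lower subdiagonals, and solving $\binom{b}{2} > n-2$ for $b$. Your second-level step (two greens on one subdiagonal forcing an uncolorable corner via the forcing genealogy) is actually spelled out in more detail than the paper's proof, which simply asserts that a monochromatic $L$ is then forced by the argument of Theorem~\ref{2593}; the genealogy bookkeeping you flag does check out.
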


\begin{proof}
    Assume there is a $3$-coloring of the $n \times n$ grid. We show that $b$ blue points of the $n \times n$ grid fulfill these constraints. Assume there are $b$ blue points $p$ such that $p$ together with 2 red points on the diagonal form an $L$. Take any pair $\{p_1,p_2\}$ of these blue points. Let $q$ be the point such that $\{p_1,p_2,q\}$ form an $L$ with $q$ at the corner. Then note (a) if $q$ is blue there is a blue $L$, and (b) if $q$ is red there is a red $L$. Hence q must be green. We see there are $\binom{b}{2}$ green points forced by $p$ and $n-2$ subdiagonals on which these green points can occur, and so if $\frac{\binom{b}{2}}{n-2} > 1$, a monochromatic $L$ is forced. If $b$ is the least integer such that this inequality holds, we see the following:
    
    \begin{align*}
    \binom{b}{2} = \frac{b(b-1)}{2} & > n-2 \\
    b^2-b & > 2(n-2) \\
    b^2-b+\frac{1}{4} & > 2n-\frac{15}{4} \\
    b-\frac{1}{2} & > \sqrt{2n-\frac{15}{4}} \\
    b & = \left\lceil{\sqrt{2n-\frac{15}{4}}+ \frac{1}{2}}\right\rceil.
    \end{align*} 
\end{proof}

Having defined $b$, we now define intervals and discuss how we will use them to improve our bounds. An \textit{interval} $a_k$ on the main diagonal, where $k \in \mathbb Z$, $1 \leq k \leq \left\lceil{\frac{n}{3}}\right\rceil$, is the number of points between the $k$th and $(k+1)$th red points on the main diagonal. In general, if the $k$th and $(k+1)$th red points are separated by $c$ points, then $a_k = c$, and their corresponding point that cannot be colored red without forcing a monochromatic $L$ lies on $S_{c+1}$. For example, red points right next to each other have an interval of length $0$, red points separated by one non-red point have an interval of length $1$, and so on.

\begin{figure}[!h]
\centering
\begin{tikzpicture}[scale=0.97]
    \node[fill=red] (1) at (0,6) {};  
    \node[fill=blue] (2) at (0,5) {};  
    \node[fill=gray] (3) at (0, 4) {};  
    \node[fill=blue] (4) at (0, 3) {}; 
    \node[fill=green] (5) at (0, 2) {}; 
    \node[fill=gray] (6) at (0, 1) {}; 
    \node[fill=green] (7) at (0, 0) {}; 
    \node[fill=red] (8) at (1, 5) {}; 
    \node[fill=gray] (9) at (1, 4) {}; 
    \node[fill=blue] (10) at (1, 3) {}; 
    \node[fill=blue] (11) at (1, 2) {}; 
    \node[fill=gray] (12) at (1, 1) {}; 
    \node[fill=green] (13) at (1, 0) {}; 
    \node[fill=gray] (14) at (2, 4) {}; 
    \node[fill=gray] (15) at (2, 3) {}; 
    \node[fill=gray] (16) at (2, 2) {}; 
    \node[fill=gray] (17) at (2, 1) {}; 
    \node[fill=gray] (18) at (2, 0) {}; 
    \node[fill=red] (19) at (3, 3) {}; 
    \node[fill=blue] (20) at (3, 2) {}; 
    \node[fill=gray] (21) at (3, 1) {}; 
    \node[fill=blue] (22) at (3, 0) {};
    \node[fill=red] (23) at (4, 2) {};
    \node[fill=gray] (24) at (4, 1) {};
    \node[fill=blue] (25) at (4, 0) {};
    \node[fill=gray] (26) at (5, 1) {};
    \node[fill=gray] (27) at (5, 0) {};
    \node[fill=red] (28) at (6, 0) {};
\end{tikzpicture}
\caption{An illustration of how certain intervals force points on corresponding diagonals to be either blue or green. For example, two adjacent red points force a blue/green point on the subdiagonal right below.}
\end{figure}
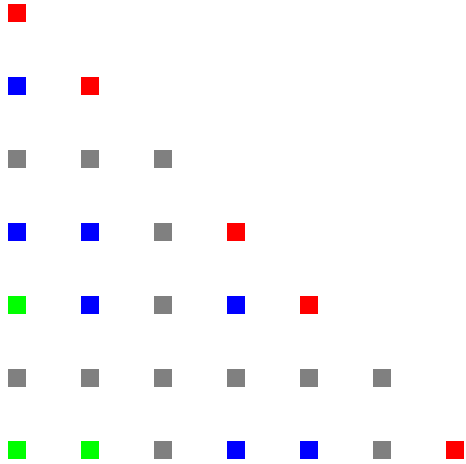

Now, consider our $n \times n$ integer grid. We cannot have more than $2(b-1)$ forced blue or green points in one diagonal, or else we must have either $b$ blue or $b$ green points, which forces a monochromatic $L$. Let $k_i$ be the number of pairs of consecutive red points on the main diagonal separated by $i$ points that are not red. Observe that $k_i \leq 2(b-1)$ for all possible $k_i$, or else $b$ blue points (we assume majority blue) will lie on $S_{i+1}$. The space $s$ taken up on the main diagonal by the red points and the intervals between consecutive red points is the following:
\[ s = \left\lceil{\frac{n}{3}}\right\rceil + 0k_0 + 1k_1 + \dots + (n-2)k_{n-2}. \]

In the above formula, $\ceil{\frac{n}{3}}$ represents the space taken up by our red points (we have $\ceil{\frac{n}{3}}$ of them and will prove this later), $0k_0$ represents the amount of space taken up by the intervals of length $0$ (as each of these intervals takes up $0$ space), $1k_1$ represents the amount of space taken up by the intervals of length $1$, and so on.

The way to get a lower bound on $s$ is to make these intervals as small as possible while still maintaining that there are not $b$ blue or $b$ green points on any subdiagonal created by these intervals. This means that $2(b-1)$ of these intervals will be length $0$, $2(b-1)$ will be length $1$, and so on until we run out of intervals. In other words, we take $k_0, k_1, \dots, k_{q-1} = 2(b-1)$, and $k_q = r$ where \[ q = \left\lfloor{\frac{\ceil{\frac{n}{3}}-1}{2(b-1)}}\right\rfloor, r = \left(\left\lceil{\frac{n}{3}}\right\rceil-1\right)-q \cdot 2(b-1) = \left\lceil{\frac{n}{3}}\right\rceil-1 \hspace{-0.1in} \pmod{2(b-1)}. \]

As such, we get the following lower bound for $s$ (which we call $s_{\min}$):
\begin{align*}
    s_{\min} & = \left\lceil{\frac{n}{3}}\right\rceil + (0k_0 + 1k_1 + \dots + (q-1)k_{q-1}) + qk_q \\
    & = \left\lceil{\frac{n}{3}}\right\rceil + 2(b-1)(0 + 1 + \dots + (q-1)) + qr \\
    & = \left\lceil{\frac{n}{3}}\right\rceil + 2(b-1)\binom{q}{2} + qr.
\end{align*}

To verify that $s_{\min}$ is minimized when we have as few red points on the main diagonal as possible, suppose there are $m > \left\lceil{\frac{n}{3}}\right\rceil$ points on the main diagonal. Then we have \[ q_m = \left\lfloor{\frac{m-1}{2(b-1)}}\right\rfloor, r_m = m-1 \hspace{-0.1in} \pmod{2(b-1)}, \] and so the minimum amount of space they take up (using the above argument) is 
\begin{align*}
    & m + (0k_0+1k_1+\dots+(q-1)k_{q-1}+qk_q+ \dots + qk_{q_m}) \\
    & > \left\lceil{\frac{n}{3}}\right\rceil + (0k_0 + 1k_1 + \dots + (q-1)k_{q-1}) + qk_q = s_{\min}.
\end{align*}

Thus, $s_{\min}$ occurs when we have as few red points on the main diagonal as possible. If $s_{\min} > n$, then the lower bound for the necessary space on the main diagonal is larger than the diagonal itself, meaning the red points cannot ``fit" on this diagonal without forcing a monochromatic $L$. The following theorem tells us which number this is.

\begin{theorem}
The value of $R_3(L)$ is at most $1804$.
\label{1804}\end{theorem}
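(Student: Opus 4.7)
The plan is to apply the machinery built up in the preceding paragraphs directly to $n = 1804$. I would assume for contradiction that the $1804 \times 1804$ grid admits a 3-coloring with no monochromatic $L$, and then use the $s_{\min}$ inequality to show that the red points on the main diagonal cannot physically fit.

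First, I would invoke pigeonhole on the main diagonal: some color occupies at least $\lceil n/3 \rceil$ of the $n$ diagonal points, and after relabeling I may assume this color is red. By Lemma~1, the threshold $b = \lceil \sqrt{2n - 15/4} + 1/2 \rceil$ governs how many blue (or, symmetrically, green) points forced by red may occupy a single subdiagonal, so each subdiagonal can carry at most $2(b-1)$ forced non-red points. The preceding discussion already established that, subject to this constraint, the minimum possible main-diagonal space consumed by the red points and their intervals is
\[ s_{\min} = \left\lceil \frac{n}{3} \right\rceil + 2(b-1)\binom{q}{2} + qr, \]
where $q = \lfloor (\lceil n/3 \rceil - 1)/(2(b-1)) \rfloor$ and $r = \lceil n/3 \rceil - 1 - 2(b-1)q$. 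Whenever $s_{\min} > n$, the red points cannot be placed on the diagonal without either falling short of $\lceil n/3 \rceil$ (violating pigeonhole) or concentrating more than $2(b-1)$ pairs of consecutive red points at some fixed gap-length, which by Lemma~1 forces a monochromatic $L$.

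The remaining step is computational: substitute $n = 1804$ into the formulas for $b$, $q$, $r$, and $s_{\min}$, and observe that $s_{\min}$ strictly exceeds $1804$. Since the framework is already in place, there is no genuine obstacle; the main care needed is in the ceiling and modular arithmetic used to produce $b$, $q$, and $r$, and in verifying that $1804$ is the smallest integer for which $s_{\min} > n$ actually holds (the analogous calculation at $n = 1803$ should fall just short, confirming that the bound produced by this argument is tight within the present framework). That computational check delivers the required contradiction and therefore establishes $R_3(L) \leq 1804$.
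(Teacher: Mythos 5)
Your proposal is correct and follows essentially the same route as the paper: pigeonhole on the main diagonal to get $\lceil n/3\rceil$ red points, the bound $2(b-1)$ on forced non-red points per subdiagonal from Lemma~1, the formula for $s_{\min}$, and a final computational check that $s_{\min} > n$ first occurs at $n = 1804$. The paper likewise delegates that last step to a short script (Algorithm~1), so there is nothing further to add.
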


\begin{proof}
Assume there is a $3$-coloring of the $n \times n$ grid. We wish to find the least $n \in \mathbb N$ such that $s_{\min} > n$. The above problem is an optimization problem, and so it is possible to solve it with a Python script. The program works in the following way: we start with a value of $n$ for which we can be sure this contradiction does not arise (say, $n=100$). We then set $s_{\min}$ and run a while loop while $s_{\min} \leq n$. For each iteration, we increase $n$ by $1$ and set all variables outlined in the proof ($b$, $s_{\min}$, $q$, and $r$) for the given $n$. This algorithm increases $n$ by $1$ until we have an $n$ such that $s_{\min} > n$. This $n$ is $1804$, and so the smallest grid that must have a monochromatic $L$ in any of its $3$-colorings is at most $1804 \times 1804$.
\end{proof}

Algorithm 1 on the page below contains the pseudocode for Theorem \ref{1804}.

\begin{algorithm}
\caption{Proving $R_3(L) \leq 1804$}
    \begin{algorithmic}
    \State{$n = 100$}
    \While{$s_{\min} \leq n$}
        \State{$n = n+1$}
        \State{$b = \left\lceil{\sqrt{2n-\frac{15}{4}}+ \frac{1}{2}}\right\rceil$}
        \State{$q = \left\lfloor{\frac{\ceil{\frac{n}{3}}-1}{2(b-1)}}\right\rfloor$}
        \State{$r = \left\lceil{\frac{n}{3}}\right\rceil-1 \pmod{2(b-1)}$}
        \State{$s_{\min} = \left\lceil{\frac{n}{3}}\right\rceil + 2(b-1)\binom{q}{2} + qr$}
    \EndWhile
\end{algorithmic}
\end{algorithm}

\subsection{Unaccounted Intervals}
\hspace{0.2in} One may notice that in the above proof, we only accounted for intervals between consecutive points. This is to make the proof simpler, but we can even improve on this by taking into account other intervals.

To give an idea of how this might be done, let us define the \textit{interval} $a_{j,k}$ as the number of points between the $j$th and $k$th red points as. From how we defined $a_j$, we note that $a_{j,k} = a_j$. Now, consider the space between the first and third red points. Between these, there exists the second red point and $a_1+a_2$ points that are not red. Thus, the interval between these points $a_{1,3}$ is of size $a_1+a_2+1$. To generalize, between points $j$ and $k$, there exist $k-j-1$ red points as well as intervals $a_j, a_{j+1}, \dots, a_{k-1}$. Thus, $$a_{j,k} = \sum_{i=j}^{k-1}a_i + k-j-1.$$

Let us extrapolate our property concerning intervals in Theorem \ref{1804} to these intervals $a_{j,k}$. Using $n$ and $b$ as defined in that proof, we say that for a given nonnegative integer $m$, there are at most $2(b-1)$ pairs $j,k$ where $1 \leq j, k \leq \left\lceil{\frac{n}{3}}\right\rceil$ such that $a_{j,k} = m$. In simpler terms, out of all intervals between red points on the main diagonal (not just consecutive points), there can be at most $2(b-1)$ intervals of a given size.

An immediate problem is how to sequence the intervals in a diagonal of given length. As an example, take $n=1803$, the largest number under our current bound. For this given $n$, we have $b = 61$, and so there can be at most $120$ intervals of any given size. In this instance, we have at least $\left\lceil{\frac{1803}{3}}\right\rceil = 601$ red points and thus at least $600$ intervals. If we make $120$ intervals each of length $0$, $1$, $2$, $3$, and $4$, then we have a total of $120 \cdot (0+1+2+3+4) + 601 = 1801$ minimum spaces taken up on the main diagonal. We wish to find a sequence of these intervals such that there are no more than $2(b-1)$ intervals of any given size.

We note that if $a_{j} = 0$, $a_{j+1} \leq 3$, then $a_{j,j+2} \leq 4$. Since we have exactly $2(b-1)$ intervals of length $0, 1, 2, 3,$ and $4$, this scenario would create another interval of length $4$, resulting in a contradiction. Thus, an interval of length $0$ can only have an interval of length $4$ on its right. By the same logic, an interval of length $0$ can only have an interval of length $4$ on its left hand side, so a $0$ can only be adjacent to a $4$. Using this line of reasoning, an interval of length $1$ can only be adjacent to an interval of length $3$ or $4$, and an interval of length $2$ can only be adjacent to an interval of length $2$, $3$, or $4$. Since we have the same number of $0$s and $4$s, if the sequence $0, 4, 0, 4, \dots, 0, 4$ exists, the first $0$ must be $a_1$ (or else the first interval of $0$ will necessarily border an interval less than length $4$ to its left). Similarly, if the subsequence $4, 0, 4, 0, \dots, 4, 0$ exists, the last interval of length $0$ must be $a_{600}$ in the main sequence.

Since all the $0$s ``pair up" with the $4$s, the $1$s can only be adjacent to $3$s or the $4$s at the ends of the subsequences listed above. Since $2$s can neighbor $2$s, $3$s, and $4$s, we can gather some sequences such that all intervals $a_{j, j+2}$ are greater than $4$:
\[
0, 4, 0, 4, \dots, 0, 4, 1, 3, 1, 3, \dots, 1, 3, 2, 2, \dots, 2, 3, 1, 3, 1, \dots, 3, 1.
\]
\[
1, 3, 1, 3, \dots, 1, 3, 2, 2, \dots, 2, 3, 1, 3, 1, \dots, 3, 1, 4, 0, 4, 0, \dots.
\]
\[
0, 4, 0, 4, \dots 1, 3, 1, 3, \dots, 1, 3, 2, 2, \dots, 2, 3, 1, 3, 1, \dots, 3, 1, 4, 0, 4, 0, \dots.
\]

Note that if we group all adjacent intervals into pairs such that $\{a_1, a_2\}$, $\{a_3, a_4\}$ and so on are pairs, then nearly all pairs add up to $4$. Recall that $a_{j,j+2} = a_j+a_{j+1}+1$, and so from this we generate around $300$ intervals of size $5$. This is a contradiction since we can have at most $120$ intervals of any given size. From this we gather that we cannot use only the intervals $0, 1, 2, 3,$ and $4$: we must change some of these to a different size. 

While we do not work further with this sequencing of intervals in this paper, the idea to include intervals between nonconsecutive red points motivates another lowering of our upper bound as detailed in the proof below.

\begin{theorem}
The value of $R_3(L)$ is at most $1573$.
\label{1573}\end{theorem}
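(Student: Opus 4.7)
The plan is to strengthen Theorem~\ref{1804} by invoking Lemma~1 on every pair of red points on the main diagonal, not just consecutive ones. Each pair of red points at positions $p_j < p_k$ forces a non-red point at $(p_k, p_j)$ on subdiagonal $S_{p_k - p_j}$, and Lemma~1 caps the number of forced blue or green points on any subdiagonal at $2(b-1)$. Hence, for each distance $d \geq 1$, the number of pairs $(j,k)$ with $p_k - p_j = d$ is at most $2(b-1)$. This is strictly stronger than the constraint $k_m \leq 2(b-1)$ used in Theorem~\ref{1804}: on each subdiagonal $S_d$, the consecutive-pair contribution $k_{d-1}$ must now share a budget of $2(b-1)$ with all non-consecutive pairs $(j, k)$ with $k-j \geq 2$ whose total distance $p_k - p_j$ equals $d$.

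I would then set $n = 1573$ (so $N = \lceil n/3 \rceil = 525$, $b = 57$, $2(b-1) = 112$) and show that the Theorem~\ref{1804} optimum $k_0 = k_1 = k_2 = k_3 = 112$, $k_4 = 76$ (which gave $s_{\min} = 1501 < n$) is infeasible under the strengthened constraint. Because $k_0,\ldots,k_3$ saturate subdiagonals $S_1, S_2, S_3, S_4$, no two adjacent intervals in the sequence of $a_i$'s may have sizes summing to $0$, $1$, or $2$; in particular, every gap-$0$ interval must be flanked by intervals of size at least $3$, and gap-$1$ and gap-$2$ intervals face analogous restrictions. Moreover $S_5$ (with $k_4 = 76$) leaves only $112 - 76 = 36$ slots for the admissible sum-$3$ adjacencies $(0,3), (3,0), (1,2), (2,1)$. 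A short flanking count then shows the $112$ gap-$0$ intervals cannot all be accommodated: even if every one of the $76$ gap-$4$ intervals is placed between two gap-$0$s (yielding the maximum $152$ flanking slots), at least $2 \cdot 112 - 2 - 152 = 70$ further flankings must be supplied by gap-$3$ intervals, producing far more than the permitted $36$ adjacencies of type $(0,3)$ or $(3,0)$.

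Hence $k_0$ must drop strictly below $112$, with the missing intervals replaced by larger ones, raising $s_{\min}$. Rerunning the optimization of Theorem~\ref{1804} over distributions feasible under the new constraint---again via a short script, but now minimizing $\sum_m m\,k_m$ subject to both $k_m \leq 2(b-1)$ and the flanking-feasibility predicate---produces the desired contradiction at $n = 1573$. The main obstacle is to carry out the flanking/adjacency count rigorously: the budgets on $S_2, S_3, S_4, S_5$ interlock, and a clean argument must track how gap-$3$ and gap-$4$ intervals are shared among the flanking demands of gap-$0$, gap-$1$, and gap-$2$ intervals, with careful endpoint corrections at the two ends of the sequence and attention to longer-range contributions (sum-$2$ triples, sum-$1$ quadruples, etc.) that could also land on small subdiagonals. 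Once a tight feasibility predicate is established, the resulting optimization is a routine extension of the Theorem~\ref{1804} script.
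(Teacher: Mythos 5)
Your opening observation is correct and is exactly the paper's motivation: every pair of red points at distance $d$ on the main diagonal, not just consecutive ones, forces a non-red point on $S_d$, so the $2(b-1)$ budget on each subdiagonal is shared among consecutive and non-consecutive pairs. Your flanking count against the specific distribution $k_0=\dots=k_3=112$, $k_4=76$ also looks sound as far as it goes. But there is a genuine gap: refuting that one distribution only shows $s_{\min}>1501$, not $s_{\min}>1573$, and the step you defer --- ``rerunning the optimization over distributions feasible under the new constraint'' --- is the entire difficulty. You would need a rigorous, checkable feasibility predicate covering all candidate multisets of interval sizes (not just those supported on $\{0,1,2,3,4\}$), and accounting not only for adjacent pairs but for sums of $3, 4, \dots$ consecutive intervals, whose skip-intervals also land on constrained subdiagonals; you acknowledge this as ``the main obstacle'' without resolving it, and there is no evidence the resulting optimum would produce a contradiction at exactly $n=1573$. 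Notably, the paper itself explores this sequencing idea in Section 2.2 and then explicitly declines to pursue it, precisely because the interlocking adjacency constraints are hard to control.

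The paper's actual proof avoids all of this with a much blunter double-count. It uses only the skip-one intervals $a_{1,3}, a_{3,5}, \dots, a_{t-2,t}$ between alternate red points: their total length plus the points they skip over is again at most $n$, so adding this to the usual bound $a_1+\dots+a_{n^*-1}+n^*\leq n$ gives a combined space $s^*\leq 2n$ spread over $(n^*-1)+\lfloor(n^*-1)/2\rfloor$ intervals, each size still occurring at most $2(b-1)$ times. Greedily minimizing (for $n=1573$: $786$ intervals, $2(b-1)=112$, giving $q=7$, $r=2$, $s^*_{\min}=3154>3146=2n$) yields the contradiction with no sequencing analysis at all. The trade-off is that the paper's method extracts less information per subdiagonal than your shared-budget constraint would in principle, but it is actually provable; if you want to salvage your route, you would need to turn the flanking argument into a complete feasibility characterization, which is a substantially harder project than the theorem requires.
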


\begin{proof}
Assume there is a $3$-coloring of the $n \times n$ grid. We show that a $1573 \times 1573$ integer grid must contain a monochromatic $L$ using intervals between non-adjacent red points on our main diagonal. We can see that the minimum space taken up by the red points on the main diagonal of length $n$ is equivalent to $n^*+a_1+a_2+a_3+\dots+a_{n^*-1}$, where $n^* = \left\lceil{\frac{n}{3}}\right\rceil$. A space calculated using similar methodology to the one in the proof of Theorem \ref{1804} can be measured by taking the intervals between red points $1$ and $3$, $3$ and $5$, and so on until we run out of points. That is, for $t \in \mathbb N$ such that $t$ is the greatest odd number such that $t \leq n^*$, the sum of the intervals $a_{1,3}, a_{3,5}, \dots a_{t-2, t}$ plus the unaccounted points between these intervals will be the same space taken up by the sum of $a_1, a_2, \dots, a_{t-1}$, as well as all the unaccounted points in this interval. In mathematical terms, we have the following inequality:
\begin{align*}
    a_{1,3}+a_{3,5}+\dots+a_{t-2,t}+\left\lfloor{\frac{n^*-1}{2}}\right\rfloor + 1 & = a_1+a_2+\dots+a_{t-1}+t \\
    & \leq a_1+a_2+\dots+a_{n^*-1}+n^* = s \leq n. 
\end{align*}
Combining this equation with our first gives us the following: $$a_1+a_2+\dots+a_{n^*-1}+a_{1,3}+a_{3,5}+\dots+a_{t-2,t}+n^*+\left\lfloor{\frac{n^*-1}{2}}\right\rfloor+1 \leq 2n.$$

In this formula, we are counting both the space taken up by the red points and the intervals between them as well as every other red point and the space between these points. Let us call this combined space $s^*$. As before, at most $2(b-1)$ of these intervals can have length $m$ where $m$ is a nonnegative integer. Assume the intervals in the above equation are as small as possible in order to minimize space (we will call the minimum space $s^*_{\min}$). Let $q, r \in \mathbb Z$ such that $q$ is the quotient upon dividing the number of intervals $\left((n^*-1)+\left\lfloor{\frac{n^*-1}{2}}\right\rfloor\right)$ by $2(b-1)$ and $r$ is the remainder. This gives us the following: 
\begin{align*}
    s^*_{\min} & = n^* + \left\lfloor{\frac{n^*-1}{2}}\right\rfloor + 1 + 2(b-1)(0 + 1 + \dots + (q-1)) + qr \\
    & = n^* + \left\lfloor{\frac{n^*-1}{2}}\right\rfloor + 1 + 2(b-1)\binom{q}{2} + qr.
\end{align*}
We use a Python script to generate the first number $n$ such that this condition does not hold. Similar to the last program, we iterate through each $n \in \mathbb N$ in increasing order, setting $b, q, r,$ and $s^*_{\min}$ for the current $n$, and checking to see whether $s^*_{\min} > 2n$. This program gives us $n=1573$.
\end{proof}

The pseudocode for Theorem \ref{1573} is included as Algorithm 2 below.

\begin{algorithm}
\caption{Proving $R_3(L) \leq 1573$}
    \begin{algorithmic}
    \State{$n \gets 100$}
    \While{$s_{\min} \leq 2n$}
        \State{$n \gets n+1$}
        \State{$n^* \gets \left\lceil{\frac{n}{3}}\right\rceil$}
        \State{intvls $\gets (n^*-1)+\left\lfloor{\frac{n^*-1}{2}}\right\rfloor$}
        \State{$b \gets \left\lceil{\sqrt{2n-\frac{15}{4}}+ \frac{1}{2}}\right\rceil$}
        \State{$q \gets \left\lfloor{\frac{\text{intvls}}{2(b-1)}}\right\rfloor$}
        \State{$r \gets (\text{intvls}) \pmod{2(b-1)}$}
        \State{$s_{\min} = \text{intvls} + 2 + 2(b-1)\binom{q}{2} + qr$}
    \EndWhile
    \State{\Return{n}}
\end{algorithmic}
\end{algorithm}

\subsection{Further Improvements}
In this section, we discuss further major improvements made to the upper bound of $R_3(L)$.

\begin{theorem}
    The value of $R_3(L)$ is at most $772$.
\label{772}\end{theorem}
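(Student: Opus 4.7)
The plan is to sharpen Lemma 1 (and hence the value of $b$ that feeds into the interval-counting framework of Theorem \ref{1573}) by observing that the forced blue points on any subdiagonal must in fact form a Golomb ruler. Suppose $b$ blue points lie on a subdiagonal $S_k$, each forced by red, with column coordinates $c_1 < c_2 < \cdots < c_b$. A direct check of the $L$-shape definition shows that the pair at columns $c_i, c_j$ forces a unique green point, and this point lies on the subdiagonal $S_{k-(c_j-c_i)}$: so the column gap $c_j - c_i$ alone determines which subdiagonal carries the resulting forced green. If two pairs share the same column gap, the two greens land on the same subdiagonal, and they in turn force a fourth point that cannot legally take any of the three colors. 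Hence the $c_i$ must have pairwise distinct differences.

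Consequently the forced blue points on any subdiagonal form a Golomb ruler sitting inside a window of length at most $n-1$. Let $L^\ast(b)$ denote the length of an optimal Golomb ruler on $b$ marks. The refined contradiction criterion becomes: if $b$ is the smallest positive integer with $L^\ast(b) > n - 1$, then any $b$ forced blue points on a single subdiagonal must produce a monochromatic $L$. In the regime of interest this gives a considerably smaller $b$ than Lemma 1's $\lceil \sqrt{2n - 15/4} + \tfrac{1}{2}\rceil$, since the tabulated values of $L^\ast(b)$ (OEIS A003022) overshoot the trivial bound $\binom{b}{2}$ implicit in Lemma 1 by roughly $50\%$ once $b$ gets into the $20$s and $30$s.

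I would then substitute this sharper $b$ into the space-accounting formula for $s^\ast_{\min}$ used in Theorem \ref{1573}, and rerun Algorithm 2 with $b$ obtained by a lookup into the optimal Golomb ruler table (rather than from a closed-form expression). Searching over $n$ for the smallest value with $s^\ast_{\min} > 2n$ should return $n = 772$. The rest of the space calculation — the formula $s^\ast_{\min} = n^\ast + \lfloor (n^\ast - 1)/2 \rfloor + 1 + 2(b-1)\binom{q}{2} + qr$ with $n^\ast = \lceil n/3\rceil$ and the appropriate $q, r$ — is taken over verbatim.

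The main obstacle is making the refined $b$ as tight as possible in the target range. The asymptotic Erd\H{o}s--Tur\'an bound $|A| \leq N^{1/2} + O(N^{1/4})$ for Sidon sets only improves the old value of $b$ by a constant factor $\sqrt{2}$, which by itself is not enough to cut $1573$ all the way down to $772$; the critical step is to use the exact tabulated optimal Golomb ruler lengths for small $b$, which are numerically substantially larger than either the asymptotic or the trivial bound.
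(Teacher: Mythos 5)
Your Golomb-ruler observation is sound --- it is exactly the idea the paper uses later, in the proof of the $493$ bound --- but grafting it onto the two-family interval count of Theorem \ref{1573} does not get you to $772$, and the numerics fail concretely. At $n=772$ the minimal-space configuration uses intervals of length $0$ through roughly $6$, whose forced points live on the subdiagonals $S_1,\dots,S_7$, i.e.\ in windows of about $765$--$771$ positions. A window that long admits Golomb rulers of order well beyond $28$ (known rulers of $29$, $30$, and $31$ marks have lengths $626$, $680$, and $747$ respectively, all under $770$), so your refined threshold is $b\ge 30$, not the $\approx 28$ you would need. Running your own formula with $n^\ast=258$, $385$ intervals, and $2(b-1)\ge 58$ gives $s^\ast_{\min}\le 385+2+58\cdot\binom{6}{2}+6\cdot 37=1479<1544=2n$: no contradiction. (The rough criterion here is $n>14\cdot 2(b-1)$; with $b\approx 1.15\sqrt{n}$ from the empirical ruler-length data this lands near $n\approx 1000$, a real improvement over $1573$ but far from $772$.) There is also a well-definedness problem: optimal Golomb ruler lengths are only known up to order $28$ (length $585$), so for windows of length $\sim 770$ your ``lookup'' has no table to consult and you must fall back on the Lindstr\"om bound, which is weaker still. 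The paper's $493$ proof dodges this because by then all windows have length under $493$.

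The ingredient you are missing for $772$ is a different one: the paper keeps Lemma 2's crude $b$ and instead generalizes the interval bookkeeping from spans $1$ and $2$ to all spans $1\le j\le c$. For each fixed $j$ the intervals $a_{i,i+j}$ partition into $j$ telescoping chains along the main diagonal, each of total length at most $n$, yielding
\[
\left(\sum_{j=1}^{c}\sum_{i=1}^{n^*-j} a_{i, i+j}\right) + cn^* \;\le\; n\cdot \frac{c(c+1)}{2},
\]
while the cap of $O(b)$ intervals of any given length applies to the whole collection of $\approx c\,n^\ast$ intervals at once. Taking $c=12$ makes the left side grow quadratically in $c\,n^\ast$ against a right side linear in $c^2 n$, and the two constraints become incompatible at $n=772$. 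Your Golomb refinement is then layered on top of \emph{this} framework to reach $493$; on its own it is not strong enough to prove the stated theorem.
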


\begin{proof}
    Recall that if there are $2b-1$ pairs of red points, each some fixed distance $k$ apart, then there will be $2b-1$ points forced to be either blue or green in subdiagonal $S_k$. This implies there will be $b$ points in $S_k$ of some fixed color, say blue, thus forcing 2 green points in the same subdiagonal which in turn force a monochromatic L.

    For $c \leq n^*-1$, there are $n^*-c$ intervals of the form $a_{j, j+c}$. We can partition these intervals into the following sums:
    \begin{align*}
        & a_{1, 1+c} + a_{1+c, 1+2c} + \dots \\
        & a_{2, 2+c} + a_{2+c, 2+2c} + \dots \\
        & \hspace{0.6in} \vdots \\
        & a_{c, 2c} + a_{2c, 3c} + \dots.
    \end{align*}
    Since the intervals in each sum are consecutive on the main diagonal, each sum plus all the included red points can be at most $n$. Each red point is included once across all sums, so we get the following formula: $$\left(\sum_{i=1}^{n^*-c} a_{i, i+c}\right) + n^* \leq cn.$$ Considering the space taken up by all intervals with length \textit{at most} c, we get the following formula: $$\left(\sum_{j=1}^{c}\sum_{i=1}^{n^*-j} a_{i, i+j}\right) + cn^* \leq n \cdot \frac{c(c+1)}{2}.$$ Moreover, $k_i \leq 2b-1$ for every $i$. For $n=772$ and $c=12$, both of these conditions cannot be true at the same time. Thus, the $n \times n$ grid must contain a monochromatic $L$.
\end{proof}

Before we discuss the next improvement, we introduce the concept of Golomb rulers to those not acquainted. A \textit{Golomb ruler} is a set of integers such that no two pairs of integers are the same distance apart. The number of integers in a Golomb ruler is its \textit{order}, and the distance between the largest and smallest integers in a Golomb ruler is its \textit{length}. A Golomb ruler is \textit{optimal} if for all Golomb rulers with the same order, there are none with smaller length.

\begin{theorem}
    The value of $R_3(L)$ is at most $493$.
\end{theorem}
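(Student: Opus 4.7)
The plan is to sharpen the chain-sum counting of Theorem \ref{772} by replacing its contiguous offset block $\{1,2,\ldots,c\}$ with offsets drawn from an optimal Golomb ruler, using the distinct-difference property to avoid the double-counting that limits the earlier argument. As before, the set $R=\{p_1<\cdots<p_{n^*}\}$ of red points on the main diagonal must satisfy $|R|\ge \lceil n/3\rceil$ and be a $B_2[2(b-1)]$-set in $[1,n]$: otherwise some subdiagonal $S_d$ carries $2b-1$ blue- or green-forced points and Lemma 1 produces a monochromatic $L$.

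For an optimal Golomb ruler $G=\{g_1<\cdots<g_k\}$ with every $g_j\le n^*-1$, the single-offset chain bound $\sum_i (p_{i+g}-p_i)\le g(n-1)$ from Theorem \ref{772} sums to
\[
\sum_{g\in G}\sum_{i=1}^{n^*-g}(p_{i+g}-p_i) \;\le\; (n-1)\sum_{g\in G}g.
\]
Because $G$ is Sidon, the pairs of red indices $(i,i+g)$ for distinct $g\in G$ correspond to pairwise distinct index-differences, so any bound on how often a given positional difference can arise among those pairs pushes through without deflating under double counting. A matching lower bound then follows from the $B_2[2(b-1)]$ property of $R$: the $\sum_{g\in G}(n^*-g)$ positional differences being summed on the left each repeat at most $2(b-1)$ times, so their total is minimized by packing the smallest values $1,2,\ldots$ to multiplicity $2(b-1)$.

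The final step is a computer sweep, in the spirit of Algorithms 1 and 2, over the catalog of short optimal Golomb rulers and candidate $n$, to locate the smallest $n$ at which the packing lower bound exceeds $(n-1)\sum_{g}g$; the claim is that the crossover occurs at $n=493$. The main obstacle is choosing the ruler and its order so that the Sidon gain is actually realized: Theorem \ref{772} already plateaus near $n=772$, and closing the remaining factor of roughly $1.57$ seems to require the distinct-difference structure of a Golomb ruler rather than the uniform block of offsets. Any slack in either the ruler choice or the packing inequality would immediately push the crossover above $493$ and erase the improvement, so both have to be matched simultaneously.
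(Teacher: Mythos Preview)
Your proposal misidentifies where the Golomb-ruler structure enters. You propose to replace the offset block $\{1,\ldots,c\}$ in Theorem~\ref{772} by a Sidon set $G$ of offsets, arguing that the distinct-difference property of $G$ prevents ``double counting'' of positional differences. But there is no double counting to avoid: for \emph{any} set of offsets, the pair of red indices $(i,i+g)$ determines $g$ uniquely, so each ordered pair $(p_i,p_j)$ with $j-i\in G$ is summed exactly once---just as it is with $\{1,\ldots,c\}$. The $B_2[2(b-1)]$ packing lower bound therefore applies identically for both choices. Meanwhile, replacing $\{1,\ldots,c\}$ by an order-$k$ optimal Golomb ruler inflates the upper bound $(n-1)\sum_{g\in G} g$ from order $nk^2$ to order $nk^3$ (an optimal ruler of order $k$ has length $\sim k^2$), while the number of summed intervals stays near $kn^*$. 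The ratio of lower to upper bound thus \emph{worsens}, and no crossover near $n=493$ arises from this modification. Your final paragraph asserts the crossover without a calculation; carrying one out would reveal this.

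The paper's argument places the Golomb ruler somewhere else entirely. It observes that on any fixed subdiagonal $S_k$, the blue points forced by red must \emph{themselves} form a Golomb ruler: if two pairs of such blue points sit at the same distance $d$, then each pair forces a green point on the \emph{same} lower subdiagonal, and two forced green points there already yield a monochromatic $L$. Consequently the number of forced blue points on $S_k$ is bounded not by the $b$ of Lemma~1 but by $b_k$, the maximum order of a Golomb ruler of length $n-k-1$. Substituting these sharper, $k$-dependent multiplicities $b_k$ for the uniform $2(b-1)$ in the packing step of Theorem~\ref{772}---still with the contiguous offsets $\{1,\ldots,c\}$ and $c=12$---is what drives the contradiction down to $n=493$.
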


\begin{proof}
    Consider the points forced by red to be blue. In any given subdiagonal, if any two pairs of these blue points are the same distance apart, then two green points forced by blue lie on some subdiagonal below. This forces a monochromatic L. So by definition, these blue points must form a Golomb ruler. We can thus lower our bound $b$ for the number of these blue points allowed to $b_k$, the largest order possible in a Golomb ruler of length $n-k-1$, or the largest number of forced blue points in $S_k$ such that no two pairs are the same distance apart. Applying $b_k$ instead of $b$ to Theorem \ref{772} gives us a contradiction when $n = 493$ and $c = 12$.
\end{proof}

The pseudocode for this algorithm is written in Algorithm 3 on the page below.
\begin{algorithm}
    \caption{Proving $R_3(L) \leq 493$}
    \begin{algorithmic}
        \State{Golomb $\gets$ \{0, 1, 2, 4, 7, 12, 18, 26, 35, 45, 56, 73, 86, 107, 128, 152, 178, 200, 217, 247, 284, 334, 357, 373, 426, 481, 493, 554, 586\}} \Comment{1 + the length of optimal Golomb rulers for orders $0, 1, \dots, 28$}
        \State{blue\_array} $\gets$ int[586]
        \State{$i, j \gets 0$}
        \While{$i <$ Golomb[length(Golomb)-1]}
            \State{blue\_array[$i$] $\gets j$}
            \If{Golomb[$j$] = $i$}
                \State{$j \gets j+1$}
            \EndIf
            \State{$i \gets i+1$}
        \EndWhile \Comment{Set blue\_array[$k$] $= b_{n-k}$}
        \State{$c \gets 5$}
        \While{$c < 20$} \Comment{Testing different $c$ values to find which one gives the lowest bound}
            \State{sum, space $\gets 0$}
            \State{$n \gets 100$}
            \While{sum $<$ space} \Comment{Find least $n$ such that we reach a contradiction}
                \State{$n \gets n+1$}
                \State{space $\gets \frac{nc(c+1)}{2}$}
                \State{$n^* \gets \ceil{\frac{n}{3}}$}
                \State{intvls $\gets cn^* - \frac{c(c+1)}{2}$}
                \State{sum $\gets \frac{c(c+1)(c+2)}{6}+\frac{c(c-1)(c+1)}{6}+$ints}
                \State{$k \gets 1$}
                \While{intvls $>$ 0} \Comment{Setting intervals to smallest lengths possible}
                    \If{2(blue\_array[$n-k$]) $>$ intvls}
                        \State{sum $\gets$ sum + intvls$(k-1)$}
                        \State{intvls=0}
                    \Else
                        \State{sum $\gets$ sum + 2(blue\_array[$n-k$])(k-1)}
                        \State{intvls $\gets$ intvls-2(blue\_array[$n-k$])}
                    \EndIf
                \EndWhile
            \EndWhile
        \EndWhile
    \end{algorithmic}
\end{algorithm}
\section{The Lower Bound/On SAT Solvers}
In the paper of Canacki et. al., the authors used SAT solvers to find a $20 \times 20$ grid with no monochromatic $L$s \cite{canackisat}. We used a parallel SAT solver to find many other such grids. Below we will describe the process and results.

For SAT solvers, we use a boolean format called DIMACS CNF. We start with different variables that are represented by characters (in our case, we use the integers starting from $1$). A variable takes a negative value if it is false and positive if it is true (for example, if variable $1$ is false, we represent this as $-1$). The entire boolean statement is represented as a collection of clauses (a collection of variables or negations of variables conjoined by OR), with each clause conjoined by AND. In CNF format, each clause is represented as a list of each variable, and all clauses are separated by a line break. For example, if we wanted to write the boolean statement ``1 is false or 2 is true, and 2 is false", it would look the following way: 
\begin{align*}
    & -1 \hspace{0.13in} 2 \\
& -2 
\end{align*}

To start, we assigned to each possible state of each point a value. The specific value assigned to color $k$ in row $r$ and column $c$ in an $n \times n$ grid was $(k+1)+3(c-1)+3n(r-1)$. For example, color $1$ in the point in the 2nd row and first column of a $20 \times 20$ grid would take a value of $2+3(0)+3(20)(1) = 62$.

For our particular instance, if a particular color in a particular row in a particular column exists in a clause, its numerical representation is included; if it does not, its negation is included. A program used to find Sudoku solutions was modified to generate the CNF statement for an $n \times n$ grid. There are $3$ overarching parameters we must check; that every point takes some color, every point takes only one color, and that no monochromatic $L$ is formed. This input is plugged into the parallel SAT solver, and then afterwards is processed by another piece of code that takes the solution from the SAT solver and gives the colored $n \times n$ grid. The grid generated for $n = 20$ with no monochromatic $L$ is the following:
\begin{figure}[!h]
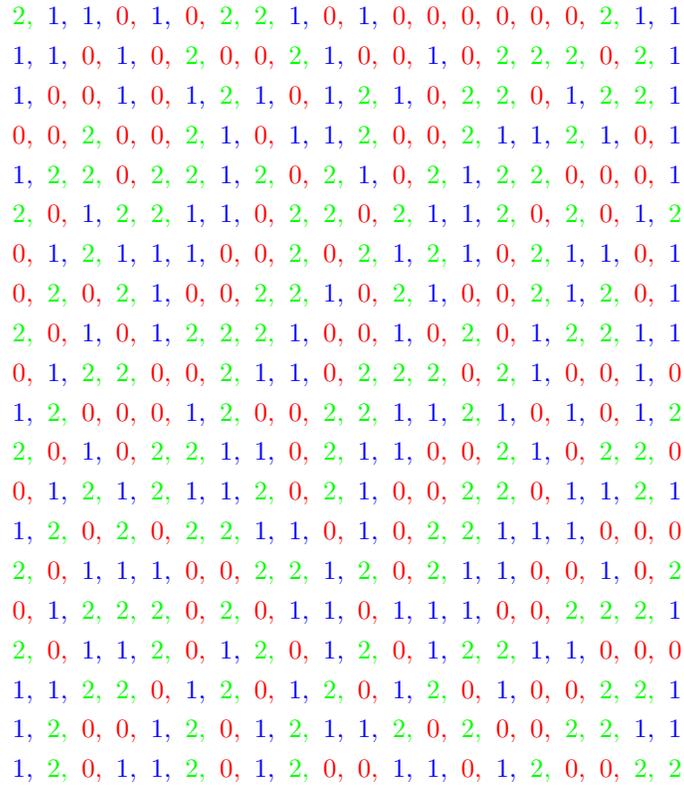

\begin{align*}
\color{green} 2,\hspace{0.05in} \color{blue} 1,\hspace{0.05in} \color{blue} 1,\hspace{0.05in} \color{red} 0,\hspace{0.05in} \color{blue} 1,\hspace{0.05in} \color{red} 0,\hspace{0.05in} \color{green} 2,\hspace{0.05in} \color{green} 2,\hspace{0.05in} \color{blue} 1,\hspace{0.05in} \color{red} 0,\hspace{0.05in} \color{blue} 1,\hspace{0.05in} \color{red} 0,\hspace{0.05in} \color{red} 0,\hspace{0.05in} \color{red} 0,\hspace{0.05in} \color{red} 0,\hspace{0.05in} \color{red} 0,\hspace{0.05in} \color{red} 0,\hspace{0.05in} \color{green} 2,\hspace{0.05in} \color{blue} 1,\hspace{0.05in} \color{blue} 1 \\
\color{blue} 1,\hspace{0.05in} \color{blue} 1,\hspace{0.05in} \color{red} 0,\hspace{0.05in} \color{blue} 1,\hspace{0.05in} \color{red} 0,\hspace{0.05in} \color{green} 2,\hspace{0.05in} \color{red} 0,\hspace{0.05in} \color{red} 0,\hspace{0.05in} \color{green} 2,\hspace{0.05in} \color{blue} 1,\hspace{0.05in} \color{red} 0,\hspace{0.05in} \color{red} 0,\hspace{0.05in} \color{blue} 1,\hspace{0.05in} \color{red} 0,\hspace{0.05in} \color{green} 2,\hspace{0.05in} \color{green} 2,\hspace{0.05in} \color{green} 2,\hspace{0.05in} \color{red} 0,\hspace{0.05in} \color{green} 2,\hspace{0.05in} \color{blue} 1 \\
\color{blue} 1,\hspace{0.05in} \color{red} 0,\hspace{0.05in} \color{red} 0,\hspace{0.05in} \color{blue} 1,\hspace{0.05in} \color{red} 0,\hspace{0.05in} \color{blue} 1,\hspace{0.05in} \color{green} 2,\hspace{0.05in} \color{blue} 1,\hspace{0.05in} \color{red} 0,\hspace{0.05in} \color{blue} 1,\hspace{0.05in} \color{green} 2,\hspace{0.05in} \color{blue} 1,\hspace{0.05in} \color{red} 0,\hspace{0.05in} \color{green} 2,\hspace{0.05in} \color{green} 2,\hspace{0.05in} \color{red} 0,\hspace{0.05in} \color{blue} 1,\hspace{0.05in} \color{green} 2,\hspace{0.05in} \color{green} 2,\hspace{0.05in} \color{blue} 1 \\
\color{red} 0,\hspace{0.05in} \color{red} 0,\hspace{0.05in} \color{green} 2,\hspace{0.05in} \color{red} 0,\hspace{0.05in} \color{red} 0,\hspace{0.05in} \color{green} 2,\hspace{0.05in} \color{blue} 1,\hspace{0.05in} \color{red} 0,\hspace{0.05in} \color{blue} 1,\hspace{0.05in} \color{blue} 1,\hspace{0.05in} \color{green} 2,\hspace{0.05in} \color{red} 0,\hspace{0.05in} \color{red} 0,\hspace{0.05in} \color{green} 2,\hspace{0.05in} \color{blue} 1,\hspace{0.05in} \color{blue} 1,\hspace{0.05in} \color{green} 2,\hspace{0.05in} \color{blue} 1,\hspace{0.05in} \color{red} 0,\hspace{0.05in} \color{blue} 1 \\
\color{blue} 1,\hspace{0.05in} \color{green} 2,\hspace{0.05in} \color{green} 2,\hspace{0.05in} \color{red} 0,\hspace{0.05in} \color{green} 2,\hspace{0.05in} \color{green} 2,\hspace{0.05in} \color{blue} 1,\hspace{0.05in} \color{green} 2,\hspace{0.05in} \color{red} 0,\hspace{0.05in} \color{green} 2,\hspace{0.05in} \color{blue} 1,\hspace{0.05in} \color{red} 0,\hspace{0.05in} \color{green} 2,\hspace{0.05in} \color{blue} 1,\hspace{0.05in} \color{green} 2,\hspace{0.05in} \color{green} 2,\hspace{0.05in} \color{red} 0,\hspace{0.05in} \color{red} 0,\hspace{0.05in} \color{red} 0,\hspace{0.05in} \color{blue} 1 \\
\color{green} 2,\hspace{0.05in} \color{red} 0,\hspace{0.05in} \color{blue} 1,\hspace{0.05in} \color{green} 2,\hspace{0.05in} \color{green} 2,\hspace{0.05in} \color{blue} 1,\hspace{0.05in} \color{blue} 1,\hspace{0.05in} \color{red} 0,\hspace{0.05in} \color{green} 2,\hspace{0.05in} \color{green} 2,\hspace{0.05in} \color{red} 0,\hspace{0.05in} \color{green} 2,\hspace{0.05in} \color{blue} 1,\hspace{0.05in} \color{blue} 1,\hspace{0.05in} \color{green} 2,\hspace{0.05in} \color{red} 0,\hspace{0.05in} \color{green} 2,\hspace{0.05in} \color{red} 0,\hspace{0.05in} \color{blue} 1,\hspace{0.05in} \color{green} 2 \\
\color{red} 0,\hspace{0.05in} \color{blue} 1,\hspace{0.05in} \color{green} 2,\hspace{0.05in} \color{blue} 1,\hspace{0.05in} \color{blue} 1,\hspace{0.05in} \color{blue} 1,\hspace{0.05in} \color{red} 0,\hspace{0.05in} \color{red} 0,\hspace{0.05in} \color{green} 2,\hspace{0.05in} \color{red} 0,\hspace{0.05in} \color{green} 2,\hspace{0.05in} \color{blue} 1,\hspace{0.05in} \color{green} 2,\hspace{0.05in} \color{blue} 1,\hspace{0.05in} \color{red} 0,\hspace{0.05in} \color{green} 2,\hspace{0.05in} \color{blue} 1,\hspace{0.05in} \color{blue} 1,\hspace{0.05in} \color{red} 0,\hspace{0.05in} \color{blue} 1 \\
\color{red} 0,\hspace{0.05in} \color{green} 2,\hspace{0.05in} \color{red} 0,\hspace{0.05in} \color{green} 2,\hspace{0.05in} \color{blue} 1,\hspace{0.05in} \color{red} 0,\hspace{0.05in} \color{red} 0,\hspace{0.05in} \color{green} 2,\hspace{0.05in} \color{green} 2,\hspace{0.05in} \color{blue} 1,\hspace{0.05in} \color{red} 0,\hspace{0.05in} \color{green} 2,\hspace{0.05in} \color{blue} 1,\hspace{0.05in} \color{red} 0,\hspace{0.05in} \color{red} 0,\hspace{0.05in} \color{green} 2,\hspace{0.05in} \color{blue} 1,\hspace{0.05in} \color{green} 2,\hspace{0.05in} \color{red} 0,\hspace{0.05in} \color{blue} 1 \\
\color{green} 2,\hspace{0.05in} \color{red} 0,\hspace{0.05in} \color{blue} 1,\hspace{0.05in} \color{red} 0,\hspace{0.05in} \color{blue} 1,\hspace{0.05in} \color{green} 2,\hspace{0.05in} \color{green} 2,\hspace{0.05in} \color{green} 2,\hspace{0.05in} \color{blue} 1,\hspace{0.05in} \color{red} 0,\hspace{0.05in} \color{red} 0,\hspace{0.05in} \color{blue} 1,\hspace{0.05in} \color{red} 0,\hspace{0.05in} \color{green} 2,\hspace{0.05in} \color{red} 0,\hspace{0.05in} \color{blue} 1,\hspace{0.05in} \color{green} 2,\hspace{0.05in} \color{green} 2,\hspace{0.05in} \color{blue} 1,\hspace{0.05in} \color{blue} 1 \\
\color{red} 0,\hspace{0.05in} \color{blue} 1,\hspace{0.05in} \color{green} 2,\hspace{0.05in} \color{green} 2,\hspace{0.05in} \color{red} 0,\hspace{0.05in} \color{red} 0,\hspace{0.05in} \color{green} 2,\hspace{0.05in} \color{blue} 1,\hspace{0.05in} \color{blue} 1,\hspace{0.05in} \color{red} 0,\hspace{0.05in} \color{green} 2,\hspace{0.05in} \color{green} 2,\hspace{0.05in} \color{green} 2,\hspace{0.05in} \color{red} 0,\hspace{0.05in} \color{green} 2,\hspace{0.05in} \color{blue} 1,\hspace{0.05in} \color{red} 0,\hspace{0.05in} \color{red} 0,\hspace{0.05in} \color{blue} 1,\hspace{0.05in} \color{red} 0 \\
\color{blue} 1,\hspace{0.05in} \color{green} 2,\hspace{0.05in} \color{red} 0,\hspace{0.05in} \color{red} 0,\hspace{0.05in} \color{red} 0,\hspace{0.05in} \color{blue} 1,\hspace{0.05in} \color{green} 2,\hspace{0.05in} \color{red} 0,\hspace{0.05in} \color{red} 0,\hspace{0.05in} \color{green} 2,\hspace{0.05in} \color{green} 2,\hspace{0.05in} \color{blue} 1,\hspace{0.05in} \color{blue} 1,\hspace{0.05in} \color{green} 2,\hspace{0.05in} \color{blue} 1,\hspace{0.05in} \color{red} 0,\hspace{0.05in} \color{blue} 1,\hspace{0.05in} \color{red} 0,\hspace{0.05in} \color{blue} 1,\hspace{0.05in} \color{green} 2 \\
\color{green} 2,\hspace{0.05in} \color{red} 0,\hspace{0.05in} \color{blue} 1,\hspace{0.05in} \color{red} 0,\hspace{0.05in} \color{green} 2,\hspace{0.05in} \color{green} 2,\hspace{0.05in} \color{blue} 1,\hspace{0.05in} \color{blue} 1,\hspace{0.05in} \color{red} 0,\hspace{0.05in} \color{green} 2,\hspace{0.05in} \color{blue} 1,\hspace{0.05in} \color{blue} 1,\hspace{0.05in} \color{red} 0,\hspace{0.05in} \color{red} 0,\hspace{0.05in} \color{green} 2,\hspace{0.05in} \color{blue} 1,\hspace{0.05in} \color{red} 0,\hspace{0.05in} \color{green} 2,\hspace{0.05in} \color{green} 2,\hspace{0.05in} \color{red} 0 \\
\color{red} 0,\hspace{0.05in} \color{blue} 1,\hspace{0.05in} \color{green} 2,\hspace{0.05in} \color{blue} 1,\hspace{0.05in} \color{green} 2,\hspace{0.05in} \color{blue} 1,\hspace{0.05in} \color{blue} 1,\hspace{0.05in} \color{green} 2,\hspace{0.05in} \color{red} 0,\hspace{0.05in} \color{green} 2,\hspace{0.05in} \color{blue} 1,\hspace{0.05in} \color{red} 0,\hspace{0.05in} \color{red} 0,\hspace{0.05in} \color{green} 2,\hspace{0.05in} \color{green} 2,\hspace{0.05in} \color{red} 0,\hspace{0.05in} \color{blue} 1,\hspace{0.05in} \color{blue} 1,\hspace{0.05in} \color{green} 2,\hspace{0.05in} \color{blue} 1 \\
\color{blue} 1,\hspace{0.05in} \color{green} 2,\hspace{0.05in} \color{red} 0,\hspace{0.05in} \color{green} 2,\hspace{0.05in} \color{red} 0,\hspace{0.05in} \color{green} 2,\hspace{0.05in} \color{green} 2,\hspace{0.05in} \color{blue} 1,\hspace{0.05in} \color{blue} 1,\hspace{0.05in} \color{red} 0,\hspace{0.05in} \color{blue} 1,\hspace{0.05in} \color{red} 0,\hspace{0.05in} \color{green} 2,\hspace{0.05in} \color{green} 2,\hspace{0.05in} \color{blue} 1,\hspace{0.05in} \color{blue} 1,\hspace{0.05in} \color{blue} 1,\hspace{0.05in} \color{red} 0,\hspace{0.05in} \color{red} 0,\hspace{0.05in} \color{red} 0 \\
\color{green} 2,\hspace{0.05in} \color{red} 0,\hspace{0.05in} \color{blue} 1,\hspace{0.05in} \color{blue} 1,\hspace{0.05in} \color{blue} 1,\hspace{0.05in} \color{red} 0,\hspace{0.05in} \color{red} 0,\hspace{0.05in} \color{green} 2,\hspace{0.05in} \color{green} 2,\hspace{0.05in} \color{blue} 1,\hspace{0.05in} \color{green} 2,\hspace{0.05in} \color{red} 0,\hspace{0.05in} \color{green} 2,\hspace{0.05in} \color{blue} 1,\hspace{0.05in} \color{blue} 1,\hspace{0.05in} \color{red} 0,\hspace{0.05in} \color{red} 0,\hspace{0.05in} \color{blue} 1,\hspace{0.05in} \color{red} 0,\hspace{0.05in} \color{green} 2 \\
\color{red} 0,\hspace{0.05in} \color{blue} 1,\hspace{0.05in} \color{green} 2,\hspace{0.05in} \color{green} 2,\hspace{0.05in} \color{green} 2,\hspace{0.05in} \color{red} 0,\hspace{0.05in} \color{green} 2,\hspace{0.05in} \color{red} 0,\hspace{0.05in} \color{blue} 1,\hspace{0.05in} \color{blue} 1,\hspace{0.05in} \color{red} 0,\hspace{0.05in} \color{blue} 1,\hspace{0.05in} \color{blue} 1,\hspace{0.05in} \color{blue} 1,\hspace{0.05in} \color{red} 0,\hspace{0.05in} \color{red} 0,\hspace{0.05in} \color{green} 2,\hspace{0.05in} \color{green} 2,\hspace{0.05in} \color{green} 2,\hspace{0.05in} \color{blue} 1 \\
\color{green} 2,\hspace{0.05in} \color{red} 0,\hspace{0.05in} \color{blue} 1,\hspace{0.05in} \color{blue} 1,\hspace{0.05in} \color{green} 2,\hspace{0.05in} \color{red} 0,\hspace{0.05in} \color{blue} 1,\hspace{0.05in} \color{green} 2,\hspace{0.05in} \color{red} 0,\hspace{0.05in} \color{blue} 1,\hspace{0.05in} \color{green} 2,\hspace{0.05in} \color{red} 0,\hspace{0.05in} \color{blue} 1,\hspace{0.05in} \color{green} 2,\hspace{0.05in} \color{green} 2,\hspace{0.05in} \color{blue} 1,\hspace{0.05in} \color{blue} 1,\hspace{0.05in} \color{red} 0,\hspace{0.05in} \color{red} 0,\hspace{0.05in} \color{red} 0 \\
\color{blue} 1,\hspace{0.05in} \color{blue} 1,\hspace{0.05in} \color{green} \color{green} 2,\hspace{0.05in} \color{green} \color{green} 2,\hspace{0.05in} \color{red} 0,\hspace{0.05in} \color{blue} 1,\hspace{0.05in} \color{green} \color{green} 2,\hspace{0.05in} \color{red} 0,\hspace{0.05in} \color{blue} 1,\hspace{0.05in} \color{green} \color{green} 2,\hspace{0.05in} \color{red} 0,\hspace{0.05in} \color{blue} 1,\hspace{0.05in} \color{green} \color{green} 2,\hspace{0.05in} \color{red} 0,\hspace{0.05in} \color{blue} 1,\hspace{0.05in} \color{red} 0,\hspace{0.05in} \color{red} 0,\hspace{0.05in} \color{green} \color{green} 2,\hspace{0.05in} \color{green} \color{green} 2,\hspace{0.05in} \color{blue} 1 \\
\color{blue} 1,\hspace{0.05in} \color{green} \color{green} 2,\hspace{0.05in} \color{red} 0,\hspace{0.05in} \color{red} 0,\hspace{0.05in} \color{blue} 1,\hspace{0.05in} \color{green} \color{green} 2,\hspace{0.05in} \color{red} 0,\hspace{0.05in} \color{blue} 1,\hspace{0.05in} \color{green} \color{green} 2,\hspace{0.05in} \color{blue} 1,\hspace{0.05in} \color{blue} 1,\hspace{0.05in} \color{green} \color{green} 2,\hspace{0.05in} \color{red} 0,\hspace{0.05in} \color{green} \color{green} 2,\hspace{0.05in} \color{red} 0,\hspace{0.05in} \color{red} 0,\hspace{0.05in} \color{green} \color{green} 2,\hspace{0.05in} \color{green} \color{green} 2,\hspace{0.05in} \color{blue} 1,\hspace{0.05in} \color{blue} 1 \\
\color{blue} 1,\hspace{0.05in} \color{green} \color{green} 2,\hspace{0.05in} \color{red} 0,\hspace{0.05in} \color{blue} 1,\hspace{0.05in} \color{blue} 1,\hspace{0.05in} \color{green} \color{green} 2,\hspace{0.05in} \color{red} 0,\hspace{0.05in} \color{blue} 1,\hspace{0.05in} \color{green} \color{green} 2,\hspace{0.05in} \color{red} 0,\hspace{0.05in} \color{red} 0,\hspace{0.05in} \color{blue} 1,\hspace{0.05in} \color{blue} 1,\hspace{0.05in} \color{red} 0,\hspace{0.05in} \color{blue} 1,\hspace{0.05in} \color{green} \color{green} 2,\hspace{0.05in} \color{red} 0,\hspace{0.05in} \color{red} 0,\hspace{0.05in} \color{green} \color{green} 2,\hspace{0.05in} \color{green} \color{green} 2 
\end{align*}
\caption{A satisfiable $20 \times 20$ grid as found by CryptoMiniSAT. If the integer in the $i$th row and $j$th column is $c$, then the color of $(i, j)$ in our grid is color $c$. In this grid (as well as all subsequent grids in this paper), the $0$s are colored red, the $1$s are colored blue, and the $2$s are colored green.}
\end{figure}
 \begin{theorem}
     The number of CNF statements for checking whether a $3-$colored $n \times n$ grid has no monochromatic $L$ is $\frac{n(n-1)(2n-1)}{2} + 4n^2 = O(n^3)$.
 \end{theorem}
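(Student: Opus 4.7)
The plan is to partition the clauses into the three categories described in the encoding — ``every point gets at least one color,'' ``every point gets at most one color,'' and ``no monochromatic $L$ is formed'' — count each separately, and sum.

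First I would handle the two per-point categories. For each of the $n^2$ points there is one clause $(v_1 \lor v_2 \lor v_3)$ on the three color variables at that point, contributing $n^2$ clauses. For the ``at most one color'' category, at each point and for each unordered pair of distinct colors we add a clause $(\lnot v_i \lor \lnot v_j)$; with $\binom{3}{2}=3$ pairs per point this yields $3n^2$ clauses. Together these account for exactly $4n^2$ clauses, matching the second summand in the claim.

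The main step is counting the ``no monochromatic $L$'' clauses. An $L$ is determined by a triple $(x,y,d)$ with $d \ge 1$ and $1 \le x,\,y,\,x+d,\,y+d \le n$, and for each such $L$ and each of the three colors we add one clause forbidding the three vertex variables from simultaneously being that color. Fixing $d \in \{1,\dots,n-1\}$, the corner $x$ ranges over $n-d$ values and so does $y$, so the total number of $L$'s is
\[
\sum_{d=1}^{n-1}(n-d)^2 \;=\; \sum_{k=1}^{n-1}k^2 \;=\; \frac{(n-1)n(2n-1)}{6}.
\]
Multiplying by the three colors gives $\frac{n(n-1)(2n-1)}{2}$ clauses, and adding the $4n^2$ from the per-point categories yields the claimed total. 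The cubic term dominates, so the overall count is $O(n^3)$.

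The argument is a direct sum of three independent counts, so the only real pitfall is enumerating the $L$'s correctly: making sure $d \ge 1$ rather than $d \ge 0$ (so that the three points of an $L$ are distinct) and that the corner coordinates range over $\{1,\dots,n-d\}$ rather than $\{1,\dots,n\}$. Once this is pinned down, the identity $\sum_{k=1}^{n-1}k^2 = \frac{(n-1)n(2n-1)}{6}$ immediately produces the stated expression.
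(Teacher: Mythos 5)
Your proposal is correct and follows essentially the same route as the paper: both split the clauses into $n^2$ ``at least one color'' clauses plus $3n^2$ ``at most one color'' clauses, and then count $\sum_{k=1}^{n-1}k^2 = \frac{(n-1)n(2n-1)}{6}$ distinct $L$'s, each contributing three clauses. The only cosmetic difference is that you enumerate the $L$'s by the offset $d$ while the paper enumerates them by their apex point; both reduce to the same sum of squares.
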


 \begin{proof}
     The CNF statements for checking this condition can be broken down into two categories: making sure each point has exactly one color, and making sure there are no monochromatic $L$s. For the first condition, let us look at an arbitrary point in row $r$ and column $c$. We are able to ensure that this point has $1$ color with $4$ CNF statements: we make sure that it has at least one color $$x_{r,c,0} \vee x_{r,c,1} \vee x_{r,c,2}$$ and that it has no more than one color 
     \begin{align*}
         -x_{r,c,0} & \vee -x_{r,c,1} \\
         -x_{r,c,0} & \vee -x_{r,c,2} \\
         -x_{r,c,1} & \vee -x_{r,c,2}. 
     \end{align*}

     In the statement above, we make sure that no combination of two colors exist at the same time. We have this many statements for $n^2$ points, which gives us a total of $4n^2$ statements for the first case.

     Now, let us look at the number of statements for the number of monochromatic $L$s. We can organize the number of $L$s by which point in the $L$ is the upper point. Consider the point $(r, c)$. We established that an $L$ is any valid collection of points $(r, c), (r+t, c), (r+t, c+t)$ for some $t \in \mathbb N$. These points are all valid as long as $r+t, c+t$ are not greater than the dimensions of this grid; that is, $r+t, c+t \leq n$ (and since $t$ is positive, $r, c \leq n-1$). So the number of $L$s whose upper point is $(r, c)$ is equivalent to $\min{n-1-r, n-1-c}$. Let us label each vertex of the $n \times n$ grid with how many $L$s stem from it. \\

    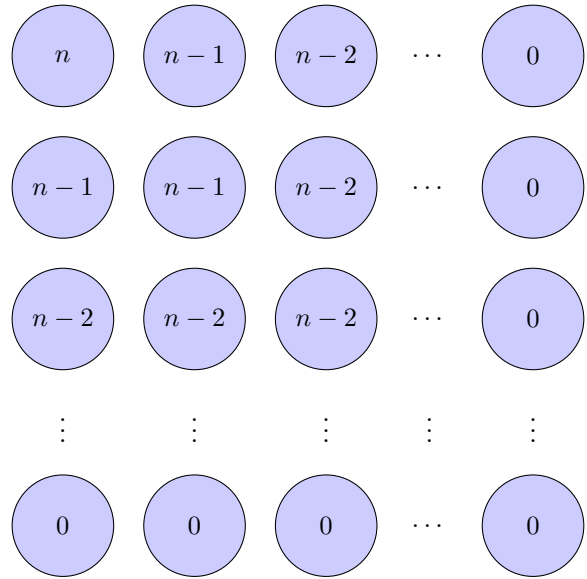
\begin{figure}[!h]
    \centering
    \begin{tikzpicture}[state/.style={circle, draw, minimum size=1.35cm, fill=blue!20}]
        \node[state] (1) at (0, 2.75) {$n-2$};  
        \node[state] (2) at (0, 4.5) {$n-1$};  
        \node[state] (3) at (0, 6.25) {$n$}; 
        \node[state] (4) at (1.75, 2.75) {$n-2$};  
        \node[state] (5) at (1.75, 4.5) {$n-1$};  
        \node[state] (6) at (1.75, 6.25) {$n-1$}; 
        \node[state] (7) at (3.5, 2.75) {$n-2$};  
        \node[state] (8) at (3.5, 4.5) {$n-2$};  
        \node[state] (9) at (3.5, 6.25) {$n-2$};
        \node[state] (10) at (0, 0) {$0$};
        \node[state] (11) at (1.75, 0) {$0$};
        \node[state] (12) at (3.5, 0) {$0$};
        \node[state] (13) at (6.25, 0) {$0$};
        \node[state] (14) at (6.25, 2.75) {$0$};
        \node[state] (15) at (6.25, 4.5) {$0$};
        \node[state] (16) at (6.25, 6.25) {$0$};
        \path (9) -- node[auto=false]{\ldots} (16);
        \path (8) -- node[auto=false]{\ldots} (15);
        \path (7) -- node[auto=false]{\ldots} (14);
        \path (1) -- node[auto=false]{\vdots} (10);
        \path (4) -- node[auto=false]{\vdots} (11);
        \path (7) -- node[auto=false]{\vdots} (12);
        \path (7) -- node[auto=false]{\vdots} (13);
        \path (12) -- node[auto=false]{\ldots} (13);
        \path (14) -- node[auto=false]{\vdots} (13);
  
      
\end{tikzpicture}
\caption{A representation of an $n \times n$ grid, where each point $p$ is labeled with the number of $L$s that can be formed with $p$ as the uppermost point.}
\end{figure}
Note that the grid with the points labeled as above can be formed by setting every point to have value $0$, then adding $1$ to the upper $n-1 \times n-1$ grid, then adding $1$ to the upper $n-2 \times n-2$ grid, and so on until we add $1$ to the top left point. This gives us a total of $(n-1)^2 + (n-2)^2 + \dots + 1$ $L$s, which is equivalent to $\frac{(n-1)((n-1)+1)(2(n-1)+1)}{6}$ $L$s using our formula for the sum of the first $n$ square numbers. For a given point, there are three CNF statements to check that there is no monochromatic $L$ starting at that point, one for each color. This gives us that the total number of CNF statements for satisfying the $L$ condition is the following:
\[
\frac{(n-1)((n-1)+1)(2(n-1)+1)}{6} \cdot 3 = \frac{n(n-1)(2n-1)}{2}
\]
So we have a total of $\frac{n(n-1)(2n-1)}{2} + 4n^2$ CNF statements for an $n \times n$ grid.
 \end{proof}

\subsection{Solving Methods}
In this section, we detail and document the methods we have used to try and find a $21 \times 21$ grid.

\subsubsection{Brute Force}
This method briefly involves taking the entire CNF statement that corresponds with a $3$-colored $21 \times 21$ grid with no monochromatic $L$. This is the method detailed when finding the $20 \times 20$ grid. Code for generating the CNF statement as well as the code for parsing it into a $20 \times 20$ grid are listed in the appendix. The SAT solver used was CryptoMiniSAT \cite{soos}. After running for a month on ParKissat-RS on a 32GB memory virtual machine on my laptop, this procedure has not yielded satisfiability or unsatisfiability of the $21 \times 21$ grid \cite{ParKissat}.

\subsubsection{Triangulation}
This method involved generating the CNF code to check if there is a $21 \times 21$, $3$-colored right triangle with no monochromatic $L$. Then, we get our right triangle and create a CNF statement checking to see if there exists a $21 \times 21$ $3$-colored grid with the bottom half (main diagonal and all subdiagonals below) set to the values of the right triangle generated above. This method did generate satisfiable $21 \times 21$ right triangles; however, when setting these values in the bottom half of a $21 \times 21$ grid, the problem returned unsatisfiable. The code for generating the right triangle, parsing the CNF output for the satisfiable right triangle into CNF code, and generating the CNF code for the rest of the triangle are all in the appendix.

\subsubsection{The ``Right Column" Method}

If one takes a look at the $20 \times 20$ $3$-colored grid with no monochromatic $L$ on page $15$, they will notice that the rightmost column contains an uncanny number of ones. Our intuition (and a chi-squared test on the entries in this last column) tells us that one of the colors used will cluster on the rightmost column. Filling the right hand column with all ones for an $18 \times 18$ grid returned satisfiable; however, the SAT solver was slower to generate a satisfiable result than the normal CNF code. Running the $19 \times 19$ grid with the right column fixed returned satisfiable as well, but it took much longer to run than the normal $19 \times 19$ instance.

\subsubsection{Fixing Values}

For each satisfiable grid, there are $3! = 6$ congruent grids generated by simply swapping colors. If we fix the color of the entry in the first column and first row, then we still check each possible assortment of colors in a grid while cutting down on the computation time by $3$. Running the normal code for a $19 \times 19$ grid gave $308986$ conflicts before solving, while fixing the first value gave $115436$ conflicts before solving. From this result, we would think that fixing the first value cuts down on solving time by around a third. However, running the normal code for a $20 \times 20$ grid took 1072210 (approx. 7 mins) conflicts before solving on CryptoMiniSAT, and fixing the entry in the first row and column alone returned 7061550 conflicts in about 3 hours on CryptoMiniSAT \cite{soos}.

Fixing the values in the first row and column as well as the first row and second column to be different values would limit the total pool of grids to check for satisfiability but would also eliminate all congruent grids. However, like with the case above, fixing these values in this took much longer than 1072210 conflicts. Therefore, both of these methods of fixing values turn out to be slower for these larger grids.
\subsubsection{Reverse Diagonals}

Another pattern one can notice when looking at the $20 \times 20$ grid is that the colors cluster on the reverse diagonals (by this, we mean the diagonals in the grid that run from the bottom left corner to the top right). On the page below is a picture of the main reverse diagonal in a $5 \times 5$ grid for demonstration.

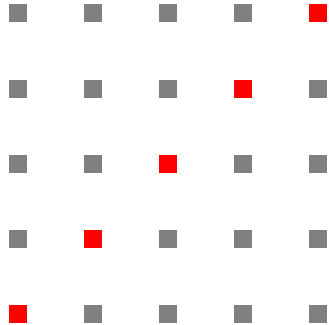
\begin{figure}[!h]
\centering
\begin{tikzpicture}
    \node[fill=gray] (1) at (0,4) {};  
    \node[fill=gray] (2) at (1,4) {};  
    \node[fill=gray] (3) at (2, 4) {};  
    \node[fill=gray] (4) at (3, 4) {}; 
    \node[fill=red] (5) at (4, 4) {}; 
    \node[fill=gray] (6) at (0, 3) {}; 
    \node[fill=gray] (7) at (1, 3) {}; 
    \node[fill=gray] (8) at (2, 3) {}; 
    \node[fill=red] (9) at (3, 3) {}; 
    \node[fill=gray] (10) at (4, 3) {}; 
    \node[fill=gray] (11) at (0, 2) {}; 
    \node[fill=gray] (12) at (1, 2) {}; 
    \node[fill=red] (13) at (2, 2) {}; 
    \node[fill=gray] (14) at (3, 2) {}; 
    \node[fill=gray] (15) at (4, 2) {}; 
    \node[fill=gray] (16) at (0, 1) {}; 
    \node[fill=red] (17) at (1, 1) {}; 
    \node[fill=gray] (18) at (2, 1) {}; 
    \node[fill=gray] (19) at (3, 1) {}; 
    \node[fill=gray] (20) at (4, 1) {}; 
    \node[fill=red] (21) at (0, 0) {}; 
    \node[fill=gray] (22) at (1, 0) {}; 
    \node[fill=gray] (23) at (2, 0) {}; 
    \node[fill=gray] (24) at (3, 0) {}; 
    \node[fill=gray] (25) at (4, 0) {}; 
\end{tikzpicture}
\caption{A picture of the main reverse diagonal (colored red) on a $5 \times 5$ integer grid.}
\end{figure}
On the main reverse diagonal in our $20 \times 20$ grid, we have $12$ twos. Furthermore, each of the uppermost $4$ diagonals are comprised of a single value (the uppermost is only twos, the two below are only ones, and the one below those is only zeros). This gives us reason to think that one of our colors will cluster on the main diagonal of a satisfiable grid. However, fixing the values to $1$ in the second diagonal of a $19 \times 19$ grid returned a satisfiable result in $1015572$ conflicts, much slower than our original number of $308986$ conflicts without fixing anything. More importantly, fixing all the values in the 7th reverse diagonal of the $19 \times 19$ grid returned unsatisfiable in $6896$ conflicts. Thus, the approach of fixing reverse diagonals (at least in this manner) seems to not be very helpful.

Below is a lemma and theorem that may motivate the clustering of colors on reverse diagonals in satisfiable grids. We monochromatically color the reverse diagonals to essentially turn them into monochromatic arithmetic sequence and then use properties of van der Waerden numbers to establish bounds for $R_k(L)$.

\begin{lem}
    The number of diagonals running from bottom left to top right in an $n \times n$ grid is $2n-1$.
\end{lem}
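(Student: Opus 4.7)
The plan is to parametrize each bottom-left-to-top-right diagonal by a single invariant and then count how many values that invariant takes.

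First I would observe that if $(i_1,j_1)$ and $(i_2,j_2)$ are two consecutive points on a reverse diagonal (reading from bottom left toward top right), then $i_2 = i_1 - 1$ and $j_2 = j_1 + 1$, so the sum $i+j$ is constant along every such diagonal. Conversely, any two points $(i,j)$ and $(i',j')$ in the grid satisfying $i+j = i'+j'$ lie on a common reverse diagonal, because one can step from one to the other by repeatedly incrementing the column index and decrementing the row index (or vice versa), staying inside $[n]\times[n]$ the whole time since both endpoints do. Thus the reverse diagonals are exactly the nonempty fibers of the map $(i,j)\mapsto i+j$.

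Next I would determine the range of this map. Since $1 \le i,j \le n$, the sum $i+j$ takes integer values in the interval $[2, 2n]$, achieving the minimum at $(1,1)$ and the maximum at $(n,n)$. Every intermediate integer $k$ with $2 \le k \le 2n$ is realized: one can take $i = \max(1, k-n)$ and $j = k - i$, and check that $1 \le i, j \le n$. Therefore each of the values $k = 2, 3, \ldots, 2n$ indexes a nonempty reverse diagonal, and no two distinct values index the same diagonal.

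Counting the integers in $\{2, 3, \ldots, 2n\}$ gives $2n - 2 + 1 = 2n - 1$, which is the claimed total. The only potential pitfall is a consistent convention for indexing, since the paper counts rows downward and columns left to right; but the argument is invariant under relabeling, since all that matters is that the two coordinates change monotonically in opposite directions along a reverse diagonal, making $i+j$ the right invariant. There is no real obstacle, and the lemma follows immediately.
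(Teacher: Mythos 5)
Your proof is correct and follows essentially the same approach as the paper: both identify a reverse diagonal with the constant value of $i+j$, observe that this sum ranges over the integers from $2$ to $2n$, and count $2n-1$ possible values. Your version simply spells out a few details (realizability of each sum and connectedness of the fibers) that the paper leaves implicit.
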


\begin{proof}
    We first note that the points whose coordinates add up to the same value are in the same reverse diagonal. For example, $(1, 3), (2, 2), (3,1)$ would all be in the same diagonal (namely, the 3rd one from the top), and the middle diagonal consists of points whose coordinates add up to $n+1$. Since the sum of the coordinates of a point in an $n \times n$ grid can range from any value between $2$ (from point $(1,1)$) to $2n$ (to point $(n,n)$), there are $2n-1$ total possibilities for the sum of a point's coordinates, and thus $2n-1$ diagonals running from bottom left to top right.
\end{proof}
Now that we have covered our lemma, we can move on to our theorem. Note that the way we defined diagonals in the lemma (as groups of coordinates adding up to a certain number) is used in our proof.

For our next theorem, recall that $W(r, k)$ is the smallest value such that coloring the first $W(r, k)$ natural numbers with $r$ colors guarantees an arithmetic progression of length $k$.

\begin{theorem}
    $R_k(L) \geq \left\lfloor{\frac{W(k,3)}{2}}+1\right\rfloor$ 
\end{theorem}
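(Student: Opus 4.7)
The plan is to establish this lower bound by exhibiting, for every $n \le \lfloor W(k,3)/2 \rfloor$, an explicit $k$-coloring of the $n \times n$ grid with no monochromatic $L$; such a construction shows $R_k(L) > \lfloor W(k,3)/2 \rfloor$, which rearranges to the claimed bound. The coloring will assign one color to each entire reverse diagonal, precisely the clustering behavior observed empirically in the $20\times 20$ SAT solution and motivating this subsection.

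First I would verify the key geometric fact that drives everything. If an $L$ has vertices $(x,y)$, $(x+d,y)$, $(x+d,y+d)$, then their coordinate sums are $x+y$, $x+y+d$, and $x+y+2d$, a 3-term arithmetic progression with common difference $d\ge 1$. By Lemma 2, the coordinate sum ranges over the $(2n-1)$-element set $\{2,3,\ldots,2n\}$ as the point varies over the grid. Consequently, under any coloring of the form $c(i,j)=f(i+j)$, a monochromatic $L$ would force a monochromatic 3-term AP among $\{2,3,\ldots,2n\}$, or equivalently among $\{1,2,\ldots,2n-1\}$ after a shift.

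Next I would invoke the definition of $W(k,3)$ in the contrapositive direction: whenever $2n-1 \le W(k,3)-1$, there exists a $k$-coloring of a $(2n-1)$-element arithmetic segment that contains no monochromatic 3-AP. Pulling such a coloring back through the map $(i,j)\mapsto i+j$ yields a $k$-coloring of the $n\times n$ grid with no monochromatic $L$. The inequality $2n-1 \le W(k,3)-1$ rearranges to $n \le W(k,3)/2$, so the largest admissible $n$ is $\lfloor W(k,3)/2 \rfloor$, giving
\[
R_k(L) \;\ge\; \left\lfloor \frac{W(k,3)}{2}\right\rfloor + 1 \;=\; \left\lfloor \frac{W(k,3)}{2} + 1\right\rfloor,
\]
where the final equality is the elementary identity $\lfloor x \rfloor + 1 = \lfloor x+1\rfloor$.

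The argument is short and I do not anticipate a serious obstacle; the only realistic place to slip is an off-by-one when converting ``$2n-1$ reverse diagonals'' into the size of the arithmetic segment that must avoid a 3-AP, but Lemma 2 pins down the diagonal count exactly. The resulting colorings are by construction constant on each reverse diagonal, which nicely reflects (and partly explains) the clustering observed in the SAT solutions that motivated this section.
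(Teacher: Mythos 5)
Your proposal is correct and follows essentially the same route as the paper: color each reverse diagonal by the value of a $k$-coloring of $\{1,\dots,W(k,3)-1\}$ avoiding monochromatic $3$-term progressions, and observe that the coordinate sums of an $L$'s vertices form a $3$-AP. Your version is in fact slightly cleaner, replacing the paper's even/odd case analysis on $W(k,3)$ with the single inequality $2n-1 \le W(k,3)-1$.
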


\begin{proof}
    Consider a square grid with side length $\left\lfloor{\frac{W(k,3)}{2}}\right\rfloor$. From Lemma $11$ (number may change), an $n \times n$ grid will have $2n-1$ diagonals running from bottom left to top right. If $W(k, 3)$ is odd, then $\left\lfloor{\frac{W(k,3)}{2}}\right\rfloor = \frac{W(k,3)-1}{2}$. If $W(k, 3)$ is even, then $\left\lfloor{\frac{W(k,3)}{2}}\right\rfloor = \frac{W(k,3)}{2}$. So if $W(k, 3)$ is even, there are $2\left(\frac{W(k,3)}{2}\right)-1 = W(k,3)-1$ diagonals, and if $W(k, 3)$ is odd, there are $2\left(\frac{W(k,3)-1}{2}\right)-1 = W(k,3)-3$ diagonals. In each case, there are less than $W(k,3)$ diagonals. By definition, there exists the $3$-colored sequence $1, 2, \dots, W(k,3)-1$ that contains no monochromatic arithmetic progression of length $k$. Let $i$ in the sequence has some color $c_i$. Assign color $c_1$ to the first diagonal (that is, the diagonal whose points have coordinates that sum to $2$), $c_2$ to the second diagonal, and so on until we have covered every diagonal.

    Now, assume for purposes of contradiction that a monochromatic $L$ exists; that is, for some $(i, j)$, there exists $t \in \mathbb N$ such that $(i, j), (i+t, j), (i+t, j+t)$ is monochromatic. In this case, $(i, j)$ would have color $c_{i+j-1}$, $(i+t, j)$ would have color $c_{i+j+t-1}$, and $(i+t, j+t)$ would have color $c_{i+j+2t-1}$. However, if these colors are all the same, then $i+j-1, i+j+t-1, i+j+2t-1$ is an arithmetic progression of length $3$, which cannot happen given the way we colored the sequence $1, 2, \dots, W(k,3)-1$. Due to this contradiction, there exists a $k$ coloring of a $\left\lfloor{\frac{W(k,3)}{2}}\right\rfloor \times \left\lfloor{\frac{W(k,3)}{2}}\right\rfloor$ grid, and so $R_k(L) \geq \left\lfloor{\frac{W(k,3)}{2}}+1\right\rfloor$ as desired.
\end{proof}
While this proof does not give us any new bounds (it gives that $R_3(L) \geq \left\lfloor{\frac{W(3,3)}{2}}+1\right\rfloor = \left\lfloor{\frac{27}{2}}+1\right\rfloor = 14)$, it does give us insight as to how grids with high numbers of $L$s may be structured.

\subsection{Table of Solving Methods}
\begin{figure}[!h]
\begin{tabular}{c|c|c|c|c|c}
   &  Brute Force & Triangulation & \makecell{Fixing First \\ Value} & \makecell{Fixing First \\ Two Values} & \makecell{Fixing \\ Diagonals} \\ \hline
   $18 \times 18$ & Satisfiable & N/A & N/A & N/A & N/A\\ \hline
   $19 \times 19$ & \makecell{Satisfiable \\ (308986 \\ conflicts)}& N/A & \makecell{Satisfiable \\ (115436 \\ conflicts)}& \makecell{Satisfiable \\ (44246 \\ conflicts)}& Unsatisfiable \\ \hline
   $20 \times 20$ & \makecell{Satisfiable \\ (1072210 \\ conflicts)}& Unsatisfiable & Ran too long & Ran too long & Unsatisfiable\\
\end{tabular}
\caption{A table of the solving methods mentioned in $5.3.1$ and their outcomes on $18 \times 18$, $19 \times 19$, and $20 \times 20$ grids. N/A implies that a specific method on a specific grid was not run, usually because running it on a $20 \times 20$ grid prove to be inefficient or took longer than the standard ``brute force" method.}
\end{figure}
\section{Open Problems}

We close this paper with some open problems regarding the ``L" problem.
\begin{itemize}
    \item Can interval sequencing (as detailed in Section 2.2 before Theorem 4) be used to further improve the upper bound?
    \item Can properties of diagonals below the main diagonal and subdiagonal of length $n-1$ be used to improve the upper bound?
    \item What are upper and lower bounds for $R_4(L)$? $R_k(L)$?
    \item Though not found, we speculate that a $3$-coloring of $[22] \times [22]$ with no monochromatic $L$ exists. Try to find one, perhaps by using SAT solvers or AI/ML techniques.
\end{itemize}

\paragraph{Acknowledgements.} We would like to acknowledge and thank the referees of the \textit{Integers} journal for their proofs for the upper bounds of $772$ and $493$ as well as their consistent, helpful feedback. We would also like to acknowledge and thank Michael Blackmon and Jonad Pulaj at Davidson College for computing assistance.

%
%
%
%

\end{document}